\newtheorem{theorem}{Theorem}[section]
\newtheorem{proposition}[theorem]{Proposition}
\newtheorem{lemma}[theorem]{Lemma}
\newtheorem{corollary}[theorem]{Corollary}
\theoremstyle{remark}
\newtheorem{remark}[theorem]{Remark}
\theoremstyle{definition}
\newtheorem{definition}[theorem]{Definition}
\numberwithin{equation}{section}
\numberwithin{theorem}{section}
\def\be{\begin{equation}}
\def\ee{\end{equation}}
\def\bp{\begin{pmatrix}}
\def\ep{\end{pmatrix}}
\def\bea{\begin{eqnarray}}
\def\eea{\end{eqnarray}}
\newcommand{\mc}[1]{{\mathcal #1}}
\newcommand{\bb}[1]{{\mathbb #1}}
\newcommand{\Sp}{{\mathcal P}}
\newcommand{\C}{\mathbb C}
\newcommand{\M}{{\mathcal M}}
\newcommand{\K}{\mathbb K}
\newcommand{\R}{\mathbb R}
\newcommand{\bigO}{\mathcal{O}}
\renewcommand{\S}{N}
\def\tp{{\rm T}}
\def\iu{\imagunit}
\def\e{{\rm e}}
\def\d{{\rm d}}
\def\i{{\rm i}}
\def\eps{\varepsilon}
\def\bigo{{\mathcal O}}
\def\l{\lambda}
\def\tp{{\rm T}}
\def\pin{^\dagger}
\newcommand{\etalchar}[1]{$^{#1}$}
\newcommand{\Lameps}{\Lambda_\varepsilon}
\renewcommand{\Re}{\mathrm{Re}}
\renewcommand{\Im}{\mathrm{Im}}
\newcommand{\imagunit}{{\bf i}}
\newcommand{\conj}[1]{\overline{#1}}
\newcommand{\lan}{\langle}
\newcommand{\ran}{\rangle}
\newcommand{\cR}{\mathcal{R}}
\newcommand{\Stru}{{\rm Str}}
\begin{document}

\title[Differential equations for defectivity measures]{Differential equations for real-structured  (and unstructured) defectivity measures}

\author[P.\ Butt\`a]{P.\ Butt\`a}\address{Dipartimento di Matematica, SAPIENZA Universit\`a di Roma, P.le Aldo Moro 5, 00185 Roma, Italy}
 \email{butta@mat.uniroma1.it}
\author[N.\ Guglielmi] {N.\ Guglielmi}\address{Dipartimento di Ingegneria Scienze Informatiche e Matematica, Universit\`{a} degli studi di L'Aquila, Via Vetoio - Loc.~Coppito, I-67100 L'Aquila, Italy} 
\email{guglielm@univaq.it}
\author[M.\ Manetta]{M.\ Manetta}\address{Dipartimento di Ingegneria Scienze Informatiche e Matematica, Universit\`{a} degli studi di L'Aquila, Via Vetoio - Loc.~Coppito, I-67100 L'Aquila, Italy} 
\email{manetta@univaq.it}
\author[S.\ Noschese]{S.\ Noschese}\address{Dipartimento di Matematica, SAPIENZA Universit\`a di Roma, P.le Aldo Moro 5, 00185 Roma, Italy}
\email{noschese@mat.uniroma1.it}

\subjclass[2010]{15A18, 65K05}

\keywords{Pseudospectrum, structured pseudospectrum, low rank differential equations, defective eigenvalue, distance to defectivity, Wilkinson problem.}

\begin{abstract}
Let $A$ be either a complex or real matrix with all distinct eigenvalues. We propose a new method for the computation of both the unstructured and the real-structured (if the matrix is real) distance $w_{\mathbb K}(A)$ (where ${\mathbb K}=\mathbb C$ if general complex matrices are considered and ${\mathbb K} ={\mathbb R}$ if only real matrices are allowed) of the matrix $A$ from the set of defective matrices, that is  the set of those matrices with at least a multiple eigenvalue with algebraic multiplicity larger than its geometric multiplicity. For $0 < \varepsilon \le w_{\mathbb K}(A)$, this problem is closely related to the computation of the most ill-conditioned $\varepsilon$-pseudoeigenvalues of $A$, that is points in the $\varepsilon$-pseudospectrum of $A$ characterized by the highest condition number. The method we propose couples a system of differential equations on a low rank (possibly structured) manifold which computes the $\varepsilon$-pseudoeigenvalue of $A$ which is closest to coalesce, with a fast Newton-like iteration aiming to determine the minimal value $\varepsilon$ such that such an $\varepsilon$-pseudoeigenvalue becomes defective. The method has a local behaviour; this means that in general we find upper bounds for $w_{\mathbb K}(A)$. However, they usually provide good approximations, in those (simple) cases where we can check this. The methodology can be extended to a structured matrix where it is required that the distance is computed within some manifold defining the structure of the matrix. In this paper we extensively examine the case of real matrices but we also consider pattern structures. As far as we know there do not exist methods in the literature able to compute such distance. 
\end{abstract}

\maketitle
\thispagestyle{empty}

\section{Introduction}
\label{sec:1}

Let $A \in \K^{n,n}$ be a complex ($\K=\C$) or real ($\K = \R$) matrix with all distinct eigenvalues. 
We are interested to compute the following distance,
\begin{eqnarray}
w_\K(A) & = & \inf \{ \| A - B \|_F \colon B \in \K^{n,n} \ \mbox{is defective} \}\;,
\label{eq:dist}
\end{eqnarray}
where $\| \cdot \|_F$ is the Frobenius norm (when $\K = \C$ this turns out to be
equivalent to consider the $2$-norm).
We recall that a matrix is defective if its Jordan canonical form has at least a non diagonal block associated to an eigenvalue $\lambda$.
Let us introduce the $\eps$-pseudospectrum of $A$,
\begin{eqnarray}
    \Lameps^{\K}(A) & = & 
		\{ \lambda \in \C \colon \lambda \in \Lambda\left(A+E\right) \ \mbox{for some} \
     E \in \K^{n\times n} \ \mbox{with} \ \|E\|_F \le \eps\}\;,
\label{eq:Lameps}
\end{eqnarray}
where we refer to the classical monograph by Trefethen and Embree \cite{TrEm05} for an extensive treatise.
In his seminal paper \cite{Wil65}, Wilkinson defined the condition number of a simple eigenvalue as
\[
k(\lambda) = \frac{1}{|y^H x|}\;, \qquad \mbox{$y$ and $x$ left and right eigenvectors with $\| x \|_2 = \| y \|_2 = 1$}\;.
\]
Observing that $k(\lambda) = +\infty$ for a defective
eigenvalue since $y^H x = 0$, the search of the closest defective matrix can be pursued by 
looking for the minimal value $\eps$ such that there exists $z \in \Lameps^{\K}(A)$ with $y^Hx=0$,
where $z \in \Lambda(A + E)$ for some $E$ of norm $\eps$, being $y$ and $x$ the normalized 
left and right eigenvectors associated to $z$.

The distance $w_\C(A)$ was introduced by Demmel \cite{Dem83} in his well-known PhD thesis and has been studied by several authors, not only for its theoretical interest but also for its practical one (see, e.g., \cite{Ala06} for the bounds on $w_\K(A)$ presented in the literature, \cite{AFS13} and references therein for physical applications). An interesting formula for computing this distance in the $2$-norm has been given by Malyshev \cite{M99}.

In the recent and very interesting paper by Alam, Bora, Byers and Overton \cite{ABBO11}, which provides an extensive historical analysis of the problem, the authors have shown that when $\K = \C$ the infimum in (\ref{eq:dist}) is actually a minimum. 
Furthermore, in the same paper, the authors  have proposed a computational approach to approximate the nearest defective matrix by a variant of Newton's method. Such method is well suited to dense problems of moderate size $n$ and - even for a real matrix - computes a nearby defective complex matrix, that is a minimizer for (\ref{eq:dist}) in $\C^{n\times n}$.
A recent fast algorithm has been given in \cite{AFS13}, which is based on an extension
of the implicit determinant method proposed in \cite{PS05}. This method also deals with the unstructured and 
provides a nearby defective complex matrix.

The aim of this paper is that of providing a different approach to the approximation of $w_\K(A)$ for both $\K=\R$ and $\K=\C$, which
may be extended to the approximation of more general structured distances, that is, for example, when restricting the admissible perturbations of $A$ to the set of matrices with a prescribed nonzero pattern.
However, a rigorous analysis of general structures is beyond the scope of this paper and we limit the discussion to complex and
real perturbations.

The methodology we propose is splitted in two parts. First, for a given $\eps < w_\K(A)$ we are
interested to compute the following quantity,
\begin{eqnarray}
\hskip -3mm
r(\eps) = \min\{ |y^Hx| \colon \mbox{$y$ and $x$ left/right eigenvectors associated to} \
z \in \Lameps^{\K}(A) \}\;.
\label{eq:condmax}
\end{eqnarray}
Secondly, we are interested to find the smallest solution to the equation $r(\eps) = 0$, or more in general, possibly introducing a small threshold $\delta \ge 0$, to the equation
\begin{eqnarray}
r(\eps)=\delta\;. 
\label{eq:r}
\end{eqnarray}
Note that - in this second case - we obtain anyway, as a byproduct of our method, an estimate for a solution of
$r(\eps) = 0$. 
We remark that any solution to $r(\eps) = 0$ gives in general an upper bound to $w_\K(A)$.

By the results of Alam and Bora \cite{AB05}, we deduce that the distance $w_\C(A)$ we
compute is the same one obtains by replacing the Frobenius norm by the $2$-norm.
Instead, for the real case, the distance we compute is in general larger than the
corresponding distance in the $2$-norm.

The paper is organized as follows. In Section \ref{sec:2} we analyze the case of general complex
perturbations and in Section \ref{sec:3} we derive a system of differential equations which form 
the basis of our computational framework. By a low rank property of the stationary points of the
system of ODEs, which identify the matrices which allow to compute approximations of (\ref{eq:dist}), 
we consider the projected system of ODEs on the corresponding low rank manifold in Section \ref{sec:4} 
and prove some peculiar results of the corresponding flow. In Section \ref{sec:5} we pass to consider 
the case of real matrices with real perturbations and obtain a new system of ODEs for the computation 
of (\ref{eq:condmax}), which are discussed in Section \ref{sec:6}. In Section \ref{sec:7} we present 
some theoretical results which allow us to obtain a fast method to solve (\ref{eq:r}) and compute 
approximations to $w_\K(A)$. In the same section we present the complete algorithm. Afterwards, in 
Section \ref{sec:8} we focus our attention on a few implementation issues and in Section \ref{sec:9}  
show some numerical examples. Finally in Section \ref{sec:10} we conclude the paper by providing an 
extension of the method to matrices with a prescribed sparsity pattern.   
  
\section{The complex case}
\label{sec:2}

We denote by $\|A\|_F = \sqrt{\lan A,A\ran}$ the Frobenius norm of the matrix 
$A\in  \bb C^{n\times n}$, where, for any given $A,B\in \bb C^{n\times n}$, 
$\lan A,B\ran = \mathrm{trace}(A ^HB)$.

We also need the following definition.

\begin{definition}
\label{def:groupinv}
Let $M$ be a singular matrix with a simple zero eigenvalue.
The \emph{group inverse} (reduced resolvent) of $M$, denoted $M^\#$, is the unique matrix $G$ satisfying $MG =GM$, $GMG=G$ and $MGM=M$.
\end{definition}

Given a matrix function $M(t) \in \bb C^{n\times n}$, smoothly depending on the real parameter $t$, we recall results concerning derivatives of right and left eigenvector $x(t)$ and $y(t)$, respectively, associated to a simple eigenvalue $\lambda(t)$ of $M(t)$. We denote by $G(t)$ the group-inverse of $M(t)-\lambda(t)I$ and assume $x(t),y(t)$ smoothly depending on $t$ and such that $\|x(t)\|_2=\|y(t)\|_2=1$. In the sequel, we shall often omit the explicit dependence on $t$ in the notation. The following expressions for the derivatives can be found in \cite[Theorem 2]{MS88},
\begin{equation}
\label{ms}
\dot x = x ^HG\dot M x x - G \dot M x\;, \qquad\quad  \dot y ^H = y ^H \dot M G y y ^H - y ^H \dot M G\;.
\end{equation}

Given $A\in\bb C^{n\times n}$ let $\lambda_0$ be a simple eigenvalue of $A$. We denote by $\mc S_1 = \{E\in\bb C^{n\times n}\colon \|E\|_F=1\}$ the unitary hyper-sphere in $\bb C^{n\times n}$. By continuity, there exists $\eps_0>0$ such that for any $\eps\in [0,\eps_0]$ and $E\in \mc S_1$ the matrix $A+\eps E$ has a simple eigenvalue $\lambda$ close to $\lambda_0$.

\begin{lemma}
\label{lem:2.1}
Given $\eps\in (0,\eps_0]$ let
\be
\label{S}
S = yy ^HG ^H+G ^Hxx ^H\;, 
\ee
being $G$ the group-inverse of $A + \eps E -\lambda I$, whose left and right null vectors are $y$ and $x$ (recall we assume $\|x\|_2=\|y\|_2=1$, and note that $y ^H x \ne 0$ as $\lambda$ is simple). Then, for any smooth path $E=E(t)\in \mc S_1$ we have,
\begin{equation}
\label{dy*x}
\frac{\d}{\d t} |y ^Hx| = \eps\,|y ^Hx| \, \Re\,\big\lan \dot E,S\big\ran\;.
\end{equation}
In particular, the steepest descent direction is given by
\be
\label{argmin}
\mathrm{arg} \min_{\substack{D\in\mc S_1 \\ \Re\lan D,E\ran=0}} \Re\big\lan D, S\big\ran = - \mu \big (S-\Re\lan E,S\ran E\big)\;,  
\ee
where $\mu>0$ is such that the right-hand side has unit Frobenius norm.
\end{lemma}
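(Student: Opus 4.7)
The plan is to differentiate $y ^Hx$ along a smooth path $t\mapsto E(t)\in\mc S_1$ using the eigenvector derivative formulas (\ref{ms}), recognize the matrix $S$ as the Frobenius-gradient of the resulting expression, and then solve a standard constrained linear optimization for the steepest descent direction. First I would apply (\ref{ms}) with $M(t)=A+\eps E(t)$, so that $\dot M=\eps\dot E$, to obtain
\[
\dot x = \eps\bigl(x ^HG\dot E x\,x - G\dot E x\bigr), \qquad \dot y ^H = \eps\bigl(y ^H\dot E Gy\,y ^H - y ^H\dot E G\bigr).
\]
The crucial algebraic observation is that $Gx=0$ and $y ^HG=0$: from $MG=GM$ one gets $MGx=GMx=0$, so $Gx\in\ker M=\mathrm{span}(x)$, and then $GMG=G$ combined with $Mx=0$ yields $Gx=GMGx=0$; the left version is analogous. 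These identities annihilate the second summand in each derivative when forming $\dot y ^Hx+y ^H\dot x$, leaving
\[
\frac{\d}{\d t}(y ^Hx) = \eps\,(y ^Hx)\bigl[y ^H\dot E Gy + x ^HG\dot E x\bigr].
\]

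Next I would recast the bracket as a Frobenius inner product with $S$. By the cyclic property of the trace, $y ^H\dot E Gy=\tr(Gy y ^H\dot E)=\lan yy ^HG ^H,\dot E\ran$ and $x ^HG\dot E x=\lan G ^Hxx ^H,\dot E\ran$, so their sum is exactly $\lan S,\dot E\ran$. Writing $y ^Hx=|y ^Hx|\,\e^{\i\phi(t)}$ and using the elementary identity $\frac{\d}{\d t}|w|=\Re(\bar w\,\dot w)/|w|$ for a smooth nonzero complex-valued $w$, together with $\Re\lan A,B\ran=\Re\lan B,A\ran$, one readily arrives at (\ref{dy*x}).

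For the steepest descent formula (\ref{argmin}), differentiating the constraint $\|E\|_F^2=1$ yields $\Re\lan\dot E,E\ran=0$, so the admissible directions $D$ lie on the intersection of the unit Frobenius sphere with the real hyperplane $\Re\lan D,E\ran=0$. The objective $D\mapsto\Re\lan D,S\ran$ is a real-linear functional on the real Hilbert space $(\bb C^{n\times n},\Re\lan\cdot,\cdot\ran)$; a standard Cauchy-Schwarz argument shows its minimum over this constrained set is attained at $-\mu$ times the unit-normalized orthogonal projection of $S$ onto that hyperplane. Since $\|E\|_F=1$, this projection is precisely $S-\Re\lan E,S\ran E$, yielding (\ref{argmin}). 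The main obstacle is the careful bookkeeping of the group-inverse annihilation properties $Gx=0$ and $y ^HG=0$ and the matching of trace manipulations with the convention $\lan A,B\ran=\tr(A ^HB)$; once these are in place, both claims reduce to routine computations.
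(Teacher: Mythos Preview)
Your proof is correct and follows essentially the same route as the paper: both compute $\dot y^Hx+y^H\dot x$ via the Meyer--Stewart formulas, use $Gx=0$ and $y^HG=0$ to kill the cross terms, and identify the resulting scalar as $\langle S,\dot E\rangle$. The only cosmetic differences are that the paper differentiates $|y^Hx|^2$ rather than $y^Hx$ directly, and that you supply the short argument for $Gx=0$, $y^HG=0$ and for the constrained minimization, which the paper simply states.
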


\begin{proof}
By \eqref{ms} and using that $Gx=0$, $y ^HG=0$, we get, 
\[
\begin{split}
\frac{\d}{\d t} |y ^Hx|^2 & = 2\,\Re\big\{\overline{y ^Hx}\, (\dot y ^H x + y ^H\dot x)\big\} = 2\eps\, \Re\big\{x ^Hy \, (y ^H\dot E G y y ^Hx + y ^Hx ^HG\dot E x x) \big\}  \\ & = 2\eps\,|y ^Hx|^2 \,\Re\big\{y ^H\dot E G y + x ^HG\dot E x\big\} \\ & = 2\eps\,|y ^Hx|^2 \, \Re \,\mathrm{trace}\big(\dot E ^H yy ^HG ^H+\dot E ^H G ^Hxx ^H\big) \;,
\end{split}
\]
that is, recalling the definition \eqref{S}, $\frac{\d}{\d t} |y ^Hx|^2 = 2\eps\,|y ^Hx|^2 \, \Re\,\big\lan \dot E,S\big\ran$, from which \eqref{dy*x} follows using that $\frac{\d}{\d t} |y ^Hx|  = \frac 12  |y ^Hx| ^{-1}  \frac{\d}{\d t} |y ^Hx|^2$. The steepest descent direction is then given by the solution of the variational problem \eqref{argmin}.
\end{proof}

\begin{remark}\rm
\label{rem:0}
A consequence of Lemma \ref{lem:2.1} is that the gradient $\nabla_E \;y^Hx$ of the function $E\mapsto y^Hx$ is proportional to $S$. 
\end{remark}

Let ${ \mc M}_2$ be the manifold of $n\times n$ matrices of rank-$2$. Any $E \in { \mc M}_2$ can be (non uniquely) represented in the form $E = U T V ^H$, where $U,V \in \C^{n\times 2}$ have orthonormal columns and $T \in \C^{2\times2}$ is nonsingular. We will use instead a unique decomposition in the tangent space: every tangent matrix $\delta E \in T_E{ \mc M}_2$ is of the form,
\be
\label{p:1}
\delta E = \delta U T V ^H + U \delta T V ^H + U T \delta V ^H\;,
\ee
where $\delta T\in \bb C^{2\times 2}$, and $\delta U, \delta V \in \bb C^{n\times 2}$ are such that
\be
\label{p:2}
U ^H\delta U = 0\;, \qquad V ^H\delta V = 0\;.
\ee
This representation is discussed in \cite{KL07} for the case of real matrices, but the extension to the case of complex matrices is straightforward. 

Under the assumptions above, the orthogonal projection of a matrix $Z\in \bb C^{n\times n}$ onto the tangent space $T_E{ \mc M}_2$ is given by
\be
\label{p:3}
P_E(Z) = Z - \left( I - U U ^H\right) Z \left( I - V V ^H \right)\;.
\ee

\section{System of ODEs}
\label{sec:3}

Let $A$, $\lambda_0$, and $\eps_0$ as before. The following theorem characterizes an evolution onto $\mc S_1$ governed by the steepest descent direction of $|y ^Hx|$, where $x$ and $y$ are unit-norm right and left eigenvectors, respectively, associated to the simple eigenvalue $\lambda$. 

\begin{theorem}
\label{thm:1}
Given $\eps\in (0,\eps_0]$, consider the differential system,
\be
\label{odefull}
\dot E = - S + \Re\lan E,S\ran E\;, \qquad E\in \mc S_1\;,
\ee
where $S$ is defined in \eqref{S}.
\begin{enumerate}
\item[1)] The right-hand side of \eqref{odefull} is antiparallel to the projection onto the tangent space $\mc T_E\mc S_1$ of the gradient of $|y ^Hx|$. More precisely,
\be
\label{decr}
\frac{\d}{\d t} |y ^Hx| = - \eps\, |y ^Hx| \, \|S-\Re\lan E,S\ran E\|_F^2\;.
\ee
In particular, $\frac{\d}{\d t} |y ^Hx| =0$ if and only if $\dot E=0$.
\item[2)] The matrix $S$ defined in \eqref{S} has rank $2$ if $|y ^Hx|<1$, whereas $S=O$ (the zero matrix) if $|y ^Hx|=1$. As a consequence, the equilibria of system \eqref{odefull} for which $|y ^Hx|<1$ (in particular, the minimizers of $|y ^Hx|$) are rank-$2$ matrices. 
\end{enumerate}
\end{theorem}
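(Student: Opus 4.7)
The plan is to verify that $\dot E = -S + \Re\lan E,S\ran E$ is tangent to $\mc S_1$ at $E$ and then to identify it as the negative tangent projection of the gradient of $|y^Hx|$ along $\mc S_1$, which by Remark \ref{rem:0} is proportional to $S$. Tangency amounts to $\Re\lan E,\dot E\ran = -\Re\lan E,S\ran + \Re\lan E,S\ran \lan E,E\ran = 0$, using only $\|E\|_F^2=1$. Since the orthogonal projector onto $\mc T_E\mc S_1$ reads $P_E(Z) = Z - \Re\lan E,Z\ran E$, we see that $\dot E = -P_E(S)$, which yields the antiparallelism statement. Plugging $\dot E$ into \eqref{dy*x} gives
\[
\frac{\d}{\d t}|y^Hx| = \eps |y^Hx|\,\Re\lan -S+\Re\lan E,S\ran E,\, S\ran = -\eps|y^Hx|\big(\|S\|_F^2 - (\Re\lan E,S\ran)^2\big),
\]
which equals $-\eps|y^Hx|\,\|S-\Re\lan E,S\ran E\|_F^2$ after expanding the Frobenius norm. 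Since $\lambda$ is simple we have $|y^Hx|>0$, so $\frac{\d}{\d t}|y^Hx|=0$ iff the projected gradient vanishes, iff $\dot E=0$.

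\textbf{Approach to Part 2.} The outer-product form $S = y\,(y^HG^H) + (G^Hx)\,x^H$ shows $\mathrm{rank}\,S \le 2$ immediately. To sharpen this I would use the spectral representation of the group inverse: writing $M:= A+\eps E-\lambda I = \sum_{i\ge 2}\mu_i P_i$ via its nonzero spectral projectors gives $G = \sum_{i\ge 2}\mu_i^{-1} P_i$, from which $\ker G = \mathrm{span}\{x\}$ and $\ker G^H = \mathrm{span}\{y\}$ follow. Hence $G^Hx = 0$ iff $x\in\mathrm{span}\{y\}$, which under the unit-norm normalization is equivalent to $|y^Hx|=1$; the symmetric statement holds for $Gy$. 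In that boundary case both vectors vanish and so $S=0$.

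When $|y^Hx|<1$, both $G^Hx$ and $Gy$ are nonzero, and one needs $\{y, G^Hx\}$ and $\{x, Gy\}$ to each be linearly independent. Assuming $G^Hx = cy$, taking the inner product with $x$ and using $x^HG^H = (Gx)^H = 0$ yields $c\,x^Hy = 0$, and since $y^Hx\ne 0$ (simplicity) we must have $c=0$, contradicting $G^Hx\ne 0$. The analogous argument with $y^HG=0$ handles the second pair, so $\mathrm{rank}\,S = 2$. Finally, at any equilibrium of \eqref{odefull} with $S\ne 0$ we have $S = \Re\lan E,S\ran E$ with $\Re\lan E,S\ran\ne 0$ (otherwise $S$ itself would vanish), so $E$ is a nonzero scalar multiple of $S$ and inherits its rank; thus all equilibria with $|y^Hx|<1$ --- in particular the minimizers --- lie on the rank-$2$ manifold $\mc M_2$.

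\textbf{Main obstacle.} The computations in Part 1 are routine once Lemma \ref{lem:2.1} is in hand. The delicate point is the rank analysis of Part 2: both the characterization of the degenerate case $|y^Hx|=1$ and the exclusion of collinearity in the generic case rely on combining the group-inverse kernel identities with the simplicity hypothesis $y^Hx\ne 0$ together with the orthogonality relation $x^HG^H=0$ coming from $Gx=0$.
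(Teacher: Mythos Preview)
Your proposal is correct and follows essentially the same route as the paper's proof. Part~1 is identical in spirit: plug the right-hand side of \eqref{odefull} into \eqref{dy*x} and read off \eqref{decr}. For Part~2, the paper argues exactly as you do---$\mathrm{rank}\,S<2$ forces $G^Hx=cy$ or $Gy=cx$, and pairing with $x$ (resp.\ $y$) together with $Gx=0$, $y^HG=0$ gives $c=0$; the only cosmetic difference is that the paper identifies $\ker G=\mathrm{span}\{x\}$ and $\ker G^H=\mathrm{span}\{y\}$ directly from the fact that $G$ has rank $n-1$, rather than via a spectral expansion (which here is justified since $A+\eps E$ has simple eigenvalues, but is more machinery than needed).
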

\proof \textit{1)} The assertion is an immediate consequence of Lemma \ref{lem:2.1}. In particular, the derivative \eqref{decr} is obtained by plugging the right-hand side of \eqref{odefull} in \eqref{dy*x}.

\textit{2)} By \eqref{S} the matrix $S$ has rank not greater than $2$. Moreover, it has rank less than $2$ if and only if $G ^Hx = c y$ or $Gy = cx$ for some $c\in\bb C$. We claim the constant $c$ must vanish in both cases. Indeed, since $Gx=0$ and $y ^HG=0$, we have $\bar cy ^Hx=x ^HGx=0$ or $c y ^Hx = y ^HGy=0$, whence $c=0$ as $y ^Hx\ne 0$. Therefore, the rank of $S$ is less than $2$ if and only if $G ^Hx=0$ or $Gy=0$. As $G$ has rank $n-1$, both conditions are equivalent to have $y=\e^{\i\theta}x$ for some $\theta\in [0,2\pi]$, that is to say $S=O$. Finally, the assertion concerning the equilibria comes from the fact that $S =\Re\lan E,S\ran E$ at a stationary point.
\endproof

\begin{theorem}
\label{thm:2}
Let $x_0$ and $y_0$ be unit-norm right and left eigenvectors, respectively, associated to the simple eigenvalue $\lambda_0$ of the matrix $A$. If $|y_0 ^Hx_0|<1$ then, for $\eps$ small enough, the system \eqref{odefull} has only two stationary points, that correspond to the minimum and the maximum of $|y ^Hx|$ on $\mc S_1$, respectively.
\end{theorem}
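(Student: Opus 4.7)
My plan is to treat this as a perturbation problem from $\eps=0$, viewing the stationary points of \eqref{odefull} as zeros of the tangent vector field $\tilde F(\eps,E):=S(\eps,E) - \Re\lan E,S(\eps,E)\ran E$ on the compact real manifold $\mc S_1$. First I would observe that at $\eps=0$ the matrix $A+\eps E$ reduces to $A$ independently of $E$, so the eigenvalue, eigenvectors and group inverse coincide with $\lambda_0,x_0,y_0,G_0$; hence $S(0,E)\equiv S_0:=y_0 y_0^H G_0^H + G_0^H x_0 x_0^H$ is a constant matrix on $\mc S_1$. The assumption $|y_0^H x_0|<1$, together with the argument used in Theorem \ref{thm:1}(2) applied to $S_0$ (which only needs $G_0 x_0=0$, $y_0^H G_0=0$ and $y_0^H x_0\ne 0$), gives $S_0\ne O$. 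The equation $\tilde F(0,E)=O$ on $\mc S_1$ then forces $E$ to be a real scalar multiple of $S_0$, and this yields exactly the two solutions $E_\pm:=\pm S_0/\|S_0\|_F$.

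Next I would invoke the implicit function theorem. A direct computation, making use of $\Re\lan \delta E, E_\pm\ran=0$ for $\delta E\in T_{E_\pm}\mc S_1$, shows that the linearization $D_E \tilde F(0,E_\pm)$ equals $\mp\|S_0\|_F$ times the identity on $T_{E_\pm}\mc S_1$, hence is invertible. IFT then produces smooth branches $\eps\mapsto E_\pm(\eps)$ of isolated zeros of $\tilde F(\eps,\cdot)$ close to $E_\pm$. A standard compactness-continuity argument on the compact sphere $\mc S_1$ rules out extra zeros for small $\eps$: any sequence $(\eps_n,\hat E_n)$ with $\eps_n\downarrow 0$ and $\tilde F(\eps_n,\hat E_n)=O$ would accumulate, along a subsequence, at a zero of $\tilde F(0,\cdot)$, which must be $E_+$ or $E_-$, contradicting the local uniqueness provided by the IFT.

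Finally, to identify which branch is the maximum and which is the minimum, I would expand $|y^H x|$ to first order in $\eps$ via \eqref{ms}, along a path of the form $M(s)=A+s E$ for fixed $E$ and $s\in[0,\eps]$; a calculation analogous to the one in the proof of Lemma \ref{lem:2.1} gives $|y^H x| = |y_0^H x_0|\bigl(1+\eps\,\Re\lan S_0,E\ran\bigr) + O(\eps^2)$ uniformly in $E\in\mc S_1$. Since the leading correction is a linear function of $E$ maximized at $E_+$ and minimized at $E_-$, continuity of the branches identifies $E_+(\eps)$ and $E_-(\eps)$ as the global maximum and minimum of $|y^H x|$ on $\mc S_1$ for $\eps>0$ small enough. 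The main technical point to pin down rigorously is the joint smoothness of $S(\eps,E)$ in $(\eps,E)$ on $[0,\eps_0]\times \mc S_1$; this comes from analytic perturbation theory applied uniformly to the simple eigenvalue $\lambda_0$ of $A$ as $E$ varies over the compact sphere.
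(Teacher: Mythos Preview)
Your proposal is correct and follows essentially the same route as the paper: both reduce the problem to $\eps=0$, where $S\equiv S_0$ is a nonzero constant (by the rank argument of Theorem~\ref{thm:1}(2)), identify the two zeros $E_\pm=\pm S_0/\|S_0\|_F$, and then apply the implicit function theorem together with a global uniqueness argument. The only cosmetic differences are that the paper applies IFT in the ambient space $\bb C^{n\times n}$ (the constraint $\|E\|_F=1$ is automatically recovered from $S=\Re\lan E,S\ran E$ once $\Re\lan E,S\ran\ne 0$) and uses the explicit bound $\|S-S_0\|_F=\bigo(\eps)$ for global uniqueness, whereas you work intrinsically on $\mc S_1$ and invoke compactness; also, the paper dismisses the max/min identification with a ``clearly'' while you supply the first-order expansion.
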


\begin{proof} 
As $|y_0 ^Hx_0|<1$, by item \textit{2)} of Theorem~\ref{thm:1} we have that $S=S_0+Q(E,\eps)$ with $S_0$ a non zero constant matrix and 
\be
\label{p:5}
\max_{E\in \mc S_1} \|Q(E,\eps)\|_F = \bigo(\eps)\;.
\ee
The equation for the equilibria reads $F(E,\eps)=O$, where
\[
F(E,\eps) = - S_0 + \Re\lan E,S_0\ran E - Q(E,\eps) + \Re\lan E,Q(E,\eps)\ran E\;.
\]
It is readily seen that $F(E,0)=O$ if and only if $E=E^0_\pm = \pm S_0/\|S_0\|_F$. Moreover, the Jacobian matrix of $F(E,\eps)$ with respect to $E$ at the point $(E^0_\pm,0)$ is given by the linear operator $\mc L_\pm \colon \bb C^{n\times n} \to \bb C^{n\times n}$, such that
\[
\mc L_\pm B = \Re \lan B, S_0 \ran  E^0_\pm + \Re \lan E^0_\pm, S_0\ran B\;, \qquad B\in \bb C^{n\times n}\;.
\]
We shall prove below that $\mc L_\pm$ is invertible. By the Implicit Function Theorem this implies  that there are $\eps_1>0$, $r>0$, and $E^\eps_\pm$ such that $F(E,\eps)=O$ for $\eps\in [0,\eps_1]$ and $\|E-E^0_\pm\|_F<r$ if and only if $E=E^\eps_\pm$. On the other hand, by \eqref{p:5}, if $F(E,\eps)=O$ then $\|E-E^0_+\|=\bigo(\eps)$ or $\|E-E^0_-\|=\bigo(\eps)$. We conclude that $E^\eps_\pm$ are the unique equilibria on the whole hyper-sphere $\mc S_1$ for $\eps$ small enough. Clearly, $E^\eps_+$ [resp.\ $E^\eps_-$] is the maximizer [resp.\ minimizer] of $|y ^Hx|$.

To prove the invertibility of $\mc L_\pm$ we observe that, recalling $E^0_\pm = \pm S_0/\|S_0\|_F$, the equation $\mc L_\pm B=O$ reads, $B = - (\Re\lan B,S_0\ran /\|S_0\|_F^2) S_0$, 
which gives $\Re\lan B,S_0\ran=0$ and therefore $B=O$.
\end{proof}

\section{Projected system of ODEs}
\label{sec:4}

By Theorem \ref{thm:1}, the minimizers of $|y ^Hx|$ onto $\mc S_1$ are rank-$2$ matrices. This suggests to use a rank-$2$ dynamics, obtained as a suitable projection of \eqref{odefull} onto ${ \mc M}_2$. 

\begin{theorem}[The projected system]
\label{thm:3}
Given $\eps\in (0,\eps_0]$, consider the differential system,
\be
\label{eq:r2ode}
\dot E = - P_E\left( S \right) + \Re \lan E, S \ran E\;, \qquad E\in \mc S_1\cap{ \mc M}_2\;,
\ee
where the orthogonal projection $P_E$ is defined in \eqref{p:3}. Then, the right-hand side of \eqref{eq:r2ode} is antiparallel to the projection onto the tangent space $\mc T_E\mc S_1\cap \mc T_E{ \mc M}_2$ of the gradient of $|y ^Hx|$. More precisely,
\be
\label{decr2}
\frac{\d}{\d t} |y ^Hx| = - \eps\, |y ^Hx| \, \|P_E(S)-\Re\lan E,S\ran E\|_F^2\;.
\ee
In particular, $\frac{\d}{\d t} |y ^Hx| =0$ if and only if $\dot E=0$.
\end{theorem}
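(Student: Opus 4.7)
My plan is to derive both claims directly from Lemma~\ref{lem:2.1} by substituting the explicit right-hand side of \eqref{eq:r2ode} into the identity \eqref{dy*x}, after first checking that this right-hand side is, up to a positive scalar, the negative of the gradient of $|y ^Hx|$ projected onto the constrained tangent space $\mc T_E\mc S_1\cap \mc T_E\mc M_2$.

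First I would identify that constrained tangent space: membership in $\mc T_E\mc S_1$ is the scalar condition $\Re\lan \dot E,E\ran = 0$, while membership in $\mc T_E\mc M_2$ is enforced by the orthogonal projector $P_E$ of \eqref{p:3}. The key observation is that $E\in \mc T_E\mc M_2$: this follows either from the parameterization \eqref{p:1}--\eqref{p:2} by choosing $\delta T = T$, $\delta U=\delta V=0$, or directly from \eqref{p:3} via $(I-UU ^H)E = (I-UU ^H)UTV ^H = 0$. In particular $P_E(E)=E$, and since $P_E$ is self-adjoint with respect to $\Re\lan\cdot,\cdot\ran$, one obtains $\Re\lan P_E(S),E\ran = \Re\lan S,P_E(E)\ran = \Re\lan E,S\ran$.

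Using this identity, the orthogonal projection of $S$ onto $\mc T_E\mc S_1\cap \mc T_E\mc M_2$ is
\[
P_E(S) - \Re\lan P_E(S),E\ran E \;=\; P_E(S) - \Re\lan E,S\ran E\;,
\]
whose negative is exactly the right-hand side of \eqref{eq:r2ode}, so the first claim follows from Remark~\ref{rem:0}. To prove \eqref{decr2}, I would then plug $\dot E = -(P_E(S) - \Re\lan E,S\ran E)$ into \eqref{dy*x} and expand $\Re\lan \dot E,S\ran$, using self-adjointness of $P_E$ once more to rewrite $\Re\lan P_E(S),S\ran$ as $\|P_E(S)\|_F^2$. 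Combined with $\|E\|_F=1$ and the same identity $\Re\lan P_E(S),E\ran = \Re\lan E,S\ran$, the result collapses to the perfect square $-\|P_E(S)-\Re\lan E,S\ran E\|_F^2$, yielding \eqref{decr2}. The final assertion is then immediate: since $\lambda$ is simple, $|y ^Hx|>0$, so the derivative vanishes iff the squared norm on the right of \eqref{decr2} does, iff $\dot E=0$.

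The main technical point requiring care is the interplay of the two constraints $E\in \mc S_1$ and $E\in \mc M_2$: one must verify the inclusion $E\in \mc T_E\mc M_2$ (so that the scalar correction $\Re\lan E,S\ran E$ correctly enforces $\Re\lan \dot E,E\ran=0$) and exploit the self-adjointness of $P_E$ with respect to the \emph{real} inner product. Without these, the right-hand side of \eqref{eq:r2ode} would not lie in $\mc T_E\mc S_1\cap \mc T_E\mc M_2$, and the algebraic identity $\Re\lan P_E(S),E\ran = \Re\lan E,S\ran$ that turns \eqref{decr2} into a perfect square would not be available.
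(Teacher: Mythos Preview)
Your proposal is correct and follows essentially the same route as the paper: both arguments hinge on $P_E(E)=E$ and the self-adjointness of $P_E$ to identify $\Re\lan P_E(S),E\ran=\Re\lan E,S\ran$, then substitute the right-hand side of \eqref{eq:r2ode} into \eqref{dy*x} to obtain \eqref{decr2}. The only cosmetic difference is that the paper phrases the first claim as the solution of the constrained variational problem $\arg\min\{\Re\lan D,S\ran:\|D\|_F=1,\,D\in\mc T_E\mc S_1\cap\mc T_E\mc M_2\}$, whereas you compute the orthogonal projection of $S$ onto $\mc T_E\mc S_1\cap\mc T_E\mc M_2$ directly; these are of course the same computation.
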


\begin{proof} 
We remark that the definition is well posed, i.e., $\dot E\in \mc T_E\mc S_1\cap \mc T_E{ \mc M}_2$. Indeed, $\dot E\in \mc T_E{ \mc M}_2$ as $E=P_E(E)$, and
\[
\Re\lan\dot E,E\ran = - \Re \lan E,P_E(S)\ran + \Re\lan E,S\ran = - \Re \lan E,P_E(S)\ran + \Re\lan P_E(E), P_E(S)\ran = 0\;,
\]
as $P_E$ is an orthogonal projection. We next observe that, by \eqref{dy*x}, the steepest descent direction is given by the variational problem,
\[
\mathrm{arg} \min_{\substack{\|D\|_F=1 \\ D\in\mc T_E\mc S_1\cap \mc T_E{ \mc M}_2 }} \Re\big\lan D, S\big\ran\;.
\]
Since $\Re\lan D, S\ran = \Re \lan D, P_E(S)\ran$ for any $D\in \mc T_E{ \mc M}_2$, the solution to this problem is
\[
D = - \frac{P_E(S)-\Re\lan E,S\ran E}{\|P_E(S)-\Re\lan E,S\ran E\|_F}\;.
\]
This proves the first assertion, while the derivative \eqref{decr2} is obtained by plugging the right-hand side of \eqref{eq:r2ode} in \eqref{dy*x} and using $\frac{\d}{\d t} |y ^Hx|  = \frac 12  |y ^Hx| ^{-1}  \frac{\d}{\d t} |y ^Hx|^2$.
\end{proof}

\begin{remark}\rm
\label{rem:1}
It is worthwhile to notice that along the solutions to \eqref{odefull} or \eqref{eq:r2ode} we have $\frac{\d}{\d t} (y ^Hx) = - \eps\, y ^Hx \, \|\dot E\|_F^2$, so that the condition $\Im(y ^Hx)=0$ is preserved by the dynamics. 
\end{remark}

In the sequel, it will be useful the following rewriting of the projected system, in terms of the representations of  ${ \mc M}_2$ and $\mc T_E { \mc M}_2$ discussed at the end of Section \ref{sec:2}. By introducing the notation,
\be
\label{eq:pqrs}
p = U ^H y\;, \quad q = V ^H x\;, \quad r = U ^H G ^H x\;, \quad s = V ^H G y\;,
\ee
we can write \eqref{eq:r2ode} as
\be
\label{ode-utv} 
\left\{\begin{array}{l}
\dot T = {}-\left( p s ^H + r q ^H \right) + \left( s ^H T ^H p + q ^H T ^H r \right) T \;,\\ 
\dot U = {}-\big( (y - U p) s ^H + (G ^H x - U r) q ^H \big) T^{-1}\;, \\
\dot V = {}-\big( (G y - V s) p ^H + (x - V q) r ^H \big) T^{-H} \;.
\end{array}\right.
\ee

\subsection{Stationary points of the projected ODEs}
\label{sec:4.1}

We start by providing a characterizing result for stationary points of system \eqref{eq:r2ode}.

\begin{lemma}
\label{lem:stat}
Given $\eps\in (0,\eps_0]$, assume $E\in\mc S_1\cap{ \mc M}_2$ is a stationary point of \eqref{eq:r2ode} (or equivalently of \eqref{ode-utv}) such that $\lambda$ is not an eigenvalue of $A$ and $|y ^H x| < 1$. Then $P_E(S) \ne 0$.
\end{lemma}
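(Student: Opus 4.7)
I will argue by contradiction. Suppose $P_E(S)=O$. Rewriting \eqref{p:3} as $Z-P_E(Z)=(I-UU^H)Z(I-VV^H)$, the hypothesis is equivalent to $S=(I-UU^H)S(I-VV^H)$. Multiplying by $U^H$ on the left and by $V$ on the right annihilates the outer factors and yields the two clean identities
\[
U^HS=O, \qquad SV=O,
\]
which, upon inserting $S=yy^HG^H+G^Hxx^H$ and using the notation \eqref{eq:pqrs}, read
\[
p\,y^HG^H+r\,x^H=O, \qquad y\,s^H+G^Hx\,q^H=O.
\]

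The next step is a dichotomy on the linear dependence of the pairs $(x,Gy)$ and $(y,G^Hx)$. Item $2)$ of Theorem \ref{thm:1} already guarantees $Gy\ne 0$ and $G^Hx\ne 0$, since otherwise $|y^Hx|=1$. If $x$ and $Gy$ are linearly independent, the first identity forces $p=0$ and $r=0$; similarly, linear independence of $y$ and $G^Hx$ forces $s=0$ and $q=0$. Combining these with the eigenvector equations for the simple eigenvalue $\lambda$ of $A+\eps UTV^H$, namely
\[
y^HA+\eps\,p^HTV^H=\lambda\,y^H, \qquad Ax+\eps\,UTq=\lambda\,x,
\]
shows that $p=0$ makes $y$ a left eigenvector of $A$ at $\lambda$, and $q=0$ makes $x$ a right eigenvector of $A$ at $\lambda$; either outcome contradicts the standing hypothesis that $\lambda$ is not an eigenvalue of $A$. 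So I must be in the fully degenerate case $x=c\,Gy$, $G^Hx=d\,y$ for nonzero scalars $c,d$, and the two identities reduce to $p=-\bar c\,r$ and $s=-\bar d\,q$.

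The pivotal algebraic fact is the scalar identity
\[
1=x^Hx=\bar c\,y^HG^Hx=\bar c d,
\]
so $\bar c d=1$. Substituting $p=-\bar c r$ and $s=-\bar d q$ into the $\dot T$-equation of \eqref{ode-utv}, both $ps^H+rq^H$ and $s^HT^Hp+q^HT^Hr$ pick up the common factor $1+\bar c d=2$, and the equilibrium condition $\dot T=O$ collapses to
\[
(q^HT^Hr)\,T = r\,q^H.
\]
The right-hand side has rank at most $1$ while $T\in\C^{2\times 2}$ is nonsingular, so the scalar $q^HT^Hr$ must vanish and hence $r\,q^H=O$, giving $r=0$ or $q=0$. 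Through $p=-\bar c r$ the first alternative reads $p=0$, and either alternative collides with the non-eigenvalue assumption on $\lambda$ via the eigenvector identities above. The main obstacle I anticipate is precisely the bookkeeping that produces $\bar c d=1$: without this cancellation the two terms in the $\dot T$-equation would carry different scalar multiples and the rank comparison between $T$ and $rq^H$ would not close.
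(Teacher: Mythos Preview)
Your proof is correct, but it takes a noticeably longer route than the paper's. The paper bypasses your entire dichotomy by exploiting one more property of the group inverse that you did not use: since $y^HG=0$ one has $G^Hy=0$, and likewise $Gx=0$. Multiplying your identity $U^HS=O$ on the right by $y$ gives $U^HSy=p\,(y^HG^Hy)+r\,(x^Hy)=r\,(x^Hy)$, so $r=0$ immediately; the analogous multiplication of $V^HS^H=O$ by $x$ gives $s=0$. With $r=s=0$ the paper then reads off from the $\dot U$ and $\dot V$ equations in \eqref{ode-utv} that stationarity forces $q=0$ and $p=0$, yielding the contradiction directly. No linear-independence split, no degenerate case, no rank argument on $\dot T$.

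Your approach has the virtue of not invoking $G^Hy=0$ at all; it rests only on $Gy\neq 0$, $G^Hx\neq 0$ and the rank-$2$ nonsingularity of $T$. The scalar identity $\bar cd=1$ and the ensuing rank comparison $(q^HT^Hr)T=rq^H$ are a nice self-contained mechanism, and in spirit they anticipate the case analysis the paper uses later in Theorem~\ref{th:eprops}. But for the present lemma the paper's two-line extraction of $r=0$, $s=0$ is both shorter and more transparent.
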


\begin{proof} 
Assume by contradiction that $P_E(S)=0$; then we would get
\[
S = \left( I - U U ^H\right) S \left( I - V V ^H \right)\;.
\]
The previous implies
\begin{eqnarray*}
U ^H S & = & 0 \qquad \Longrightarrow \qquad U ^H S y = r (x ^Hy) = 0 \;,  \\
S V & = & 0 \qquad \Longrightarrow \qquad V ^H S ^H x = s (y ^H x) = 0\;,
\end{eqnarray*}
whence, as $|y ^Hx|\ne 0$ onto $\mc S_1\cap{ \mc M}_2$, $r=0$, $s=0$. Inserting these formul\ae \ into \eqref{ode-utv} we would obtain
\be
\left\{\begin{array}{l}
\dot T = 0\;,\\ 
\dot U = {}- \left( G ^H x x ^H V \right) T^{-1}\;, \\
\dot V = {}- \left( G\, y y ^H U  \right) T^{-H} \;.
\end{array}\right.
\ee
In order that $\dot E = 0$ it has to hold necessarily $\dot U = 0$ and $\dot V = 0$.  Since $ |y ^H x| < 1$ implies $G ^Hx \neq 0$ and $G y \neq 0$, the previous relations imply $V ^H x = 0$ and $y ^H U = 0$, so that $E x = 0$ and $y ^H E = 0$. As a consequence, $\lambda$ would be an eigenvalue of $A$ which contradicts the assumptions.  This means that $P_E(S) \neq O$.
\end{proof}

At a stationary point the equation $P_E(S)=\Re\lan E,S\ran E$  reads,
\be
\label{eq:r2sp}
E = \mu P_E(S)\;,
\ee
for some nonzero $\mu \in \bb R$. In this case, as 
\[
\Re \left( y ^H E G y + x ^H G E x \right) = \Re \lan S, E \ran  = \Re \lan \mu P_E(S),  P_E(S) \ran = \frac{1}{\mu} \lan  E,E\ran \ne 0 \;,
\]
assuming $E=U T V ^H$, we get
\be
\label{eq:nz}
\Re \left( y ^H U T V ^H G y + x ^H G U T V ^H x \right) \ne  0\;.
\ee

Recalling \eqref{p:3}, we are interested in studying
\be
\label{eq:B}
B = \left( I - U U ^H\right) S \left( I - V V ^H \right) = 
\left( I - U U ^H\right) \left( y y ^H G ^H + G ^H x x ^H \right) \left( I - V V ^H \right)\;.
\ee

\begin{theorem}
\label{th:eprops}
Given $\eps\in (0,\eps_0]$, assume $E=U T V ^H\in\mc S_1\cap{ \mc M}_2$ is a stationary point of \eqref{eq:r2ode} (or equivalently of \eqref{ode-utv}) such that $\lambda$ is not an eigenvalue of $A$ and $|y ^H x| < 1$. Then it holds $E = \mu S$ for some real $\mu$.
\end{theorem}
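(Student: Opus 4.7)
The plan is to reduce the claim to showing that the normal component $B := (I-UU^H)S(I-VV^H)$ vanishes. Indeed, by the orthogonal decomposition $S = P_E(S) + B$ combined with \eqref{eq:r2sp}, which asserts $E = \mu P_E(S)$ with $\mu = 1/\Re\lan E,S\ran$ real and nonzero by \eqref{eq:nz}, the conclusion $E = \mu S$ follows at once from $B = 0$. Unpacking $B$ in the notation of \eqref{eq:pqrs} yields the explicit form
\[
B = (y-Up)(Gy-Vs)^H + (G^Hx-Ur)(x-Vq)^H .
\]

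The first concrete step is to write out the three stationarity equations coming from $\dot T = \dot U = \dot V = 0$ in \eqref{ode-utv}, after multiplying away the invertible factors $T^{\pm 1}$:
\begin{align*}
&\text{(T)}\quad ps^H + rq^H = \kappa\, T,\qquad \kappa := s^HT^Hp + q^HT^Hr,\\
&\text{(U)}\quad (y-Up)s^H + (G^Hx-Ur)q^H = 0,\\
&\text{(V)}\quad (Gy-Vs)p^H + (x-Vq)r^H = 0.
\end{align*}
Next I would exploit the hypothesis $\lambda\notin\sigma(A)$ to deduce $p\ne 0$ and $q\ne 0$: indeed, if $q = V^Hx = 0$, then $Ex = UTq = 0$, so the eigenvalue equation $(A+\eps E)x = \lambda x$ collapses to $Ax = \lambda x$, a contradiction; the argument for $p$ is symmetric via the left eigenvector. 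Then I would case-split according to whether the pairs $\{s,q\}$ and $\{p,r\}$ are linearly independent in $\C^2$. If $\{s,q\}$ is linearly independent, the $2\times 2$ matrix with rows $s^H$ and $q^H$ is nonsingular, and (U) forces both $y - Up = 0$ and $G^Hx - Ur = 0$, whence $B = 0$; the argument is symmetric when $\{p,r\}$ is linearly independent.

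The main obstacle is the remaining subcase, in which both pairs are linearly dependent. Since $p,q\ne 0$, one can write $s = \tau q$ and $r = \eta p$ for some $\tau,\eta\in\C$. Substitution into (U) and (V) gives $G^Hx - Ur = -\bar\tau(y-Up)$ and $Gy - Vs = -\bar\eta(x-Vq)$, whence
\[
B = -(\eta + \bar\tau)\,(y-Up)(x-Vq)^H,
\]
which does not vanish a priori. The closing step is to invoke (T): substituting $s = \tau q$ and $r = \eta p$ into (T) reduces it to $(\bar\tau + \eta)\,pq^H = \kappa\, T$. Since $pq^H$ has rank at most one while $T\in\C^{2\times 2}$ is invertible (as $E\in\mc M_2$ has rank exactly two), this is only consistent if $\bar\tau + \eta = 0$, and therefore $B = 0$. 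The genuine obstacle is thus this combined-dependent subcase: the rank-two hypothesis on $E$, entering through equation (T), is precisely what enforces the cancellation that neither (U) nor (V) alone can produce.
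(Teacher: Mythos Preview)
Your proof is correct and follows essentially the same route as the paper's: reduce to showing $B=(I-UU^H)S(I-VV^H)=0$, read off the stationarity conditions from $\dot T=\dot U=\dot V=0$, use $\lambda\notin\sigma(A)$ to get $p,q\ne 0$, and close the argument with the rank-$2$ constraint on $T$. Your organisation via the dichotomy ``$\{s,q\}$ (resp.\ $\{p,r\}$) linearly independent versus both pairs dependent'' is a cleaner partition than the paper's somewhat ad hoc three-way enumeration of subcases (1-i)--(1-iii), (2-i)--(2-iii); in particular, your computation $B=-(\eta+\bar\tau)(y-Up)(x-Vq)^H$ in the dependent case, followed by the observation that $(\bar\tau+\eta)pq^H=\kappa T$ forces $\bar\tau+\eta=0$ by rank, is more direct than the paper's elimination of cross-combinations. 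Note, incidentally, that in your dependent subcase the conclusion $\bar\tau+\eta=0$ also forces $\kappa=0$, contradicting \eqref{eq:nz}, so that subcase is in fact vacuous; but this does not affect the validity of your argument.
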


\begin{proof}
In order to prove the theorem we show that $B=O$. From the nonsingularity of $T$ we get at a stationary point, 
\begin{eqnarray}
(y - U p) s ^H + (G ^H x - U r) q ^H & = & 0\;,
\label{eq:c1}
\\[0.2cm]
(G y - V s) p ^H + (x - V q) r ^H & = & 0\;.
\label{eq:c2}
\end{eqnarray}
The assumption that $\lambda$ is not an eigenvalue of $A$ implies $E x \ne 0$ and $y ^H E \ne 0$, that means $p \neq 0$ and $q \neq 0$. Moreover, by Lemma~\ref{lem:stat} the condition \eqref{eq:nz} is satisfied in our case. In order that \eqref{eq:c1} and \eqref{eq:c2} are fulfilled we have several possible cases.

Consider \eqref{eq:c1}. The following are the possible cases.   
\begin{itemize}
\item[(1-i) ] $s=0$, $G ^H x - U r = 0$. 
\item[(1-ii) ] $y = U p$, $G ^H x - U r = 0$. This would imply 
\[
y = U U ^H y \quad \mbox{and} \quad G ^H x = U U ^H G ^H x \qquad \Longrightarrow \qquad
S = U U ^HS\;,
\]
and thus $B=O$.
\item[(1-iii) ] $s \propto q$. This would imply $V ^H G y \propto V ^H x$.  
\end{itemize}

Now consider (\ref{eq:c2}). The following are the possible cases.   
\begin{itemize}
\item[(2-i) ] $r=0$, $G y - V s = 0$. 
\item[(2-ii) ] $x = V q$, $G y - V s = 0$. This would imply 
\[
x = V V ^H x \quad \mbox{and} \quad G y = V V ^H G y \qquad \Longrightarrow \qquad
S = S V V ^H\;,
\]
and thus $B=O$.
\item[(2-iii) ] $p \propto r$. This would imply $U ^H G ^H x \propto U ^H y$.
\end{itemize}

Assume that $B \neq O$, which excludes (1-ii) and (2-ii).
Assume (1-i) and (2-i) hold. This would imply
\[
s = V ^H G y = 0 \qquad \mbox{and} \qquad r = U ^H G ^H x = 0\;,
\]
which would contradict (\ref{eq:nz}) .

Assume (1-iii) and (2-iii) hold. This would imply $p s ^H + r q ^H \propto U ^H y x ^H V$,
and from the first of (\ref{ode-utv}), that $T$ has rank-$1$ which contradicts the invertibility of $T$. The same conclusion holds if (1-i) and (2-iii) hold, since $s=0$ would also imply that $T \propto r q ^H$ has rank-$1$ and if (1-iii) and (2-i) hold, because in this case we would have $r=0$ and $T \propto p s ^H$
still of rank-$1$.
\end{proof}

To summarize, if $\lambda$ is not an eigenvalue of $A$ the stationary points of the projected and the unprojected ODEs coincide (recall that if $|y ^Hx|=1$ this is obvious as  $S=O$ in this case, see Theorem~\ref{thm:1}). Moreover, since $E$ is proportional to $S$ at such points, by the same arguments leading to Theorem \ref{thm:2} we obtain the following result.

\begin{corollary}
\label{cor:2}
Under the same assumptions of Theorem {\rm\ref{th:eprops}}, for sufficiently small $\eps$ the projected ODE \eqref{eq:r2ode} has only two stationary points.
\end{corollary}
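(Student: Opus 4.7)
The plan is to exploit the hint given just before the corollary: use Theorem~\ref{th:eprops} to reduce the equilibrium equation of the projected ODE to the one of the unprojected ODE, and then transfer the perturbative argument already deployed in Theorem~\ref{thm:2}.

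First, I would show that under the stated hypotheses the stationary points of \eqref{eq:r2ode} and of \eqref{odefull} coincide. Given an equilibrium $E\in\mc S_1\cap\mc M_2$ of the projected ODE with $\lambda$ not an eigenvalue of $A$ and $|y^Hx|<1$, Theorem~\ref{th:eprops} gives $E=\mu S$ for some real $\mu\ne 0$. Writing $E=UTV^H$, this proportionality forces $S=U(T/\mu)V^H$, so that $(I-UU^H)S(I-VV^H)=O$ and hence $P_E(S)=S$. Plugging this back into the right-hand side of \eqref{eq:r2ode} reduces the equilibrium equation to $-S+\Re\lan E,S\ran E=O$, which is exactly the equilibrium equation of \eqref{odefull}. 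Conversely, Theorem~\ref{thm:1}(2) shows that every equilibrium of \eqref{odefull} with $|y^Hx|<1$ has rank $2$, hence lies in $\mc M_2$, and, since $P_E(E)=E$ there, is automatically an equilibrium of \eqref{eq:r2ode} as well.

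Second, I would simply invoke Theorem~\ref{thm:2}. Let $x_0,y_0$ be unit-norm eigenvectors associated to the simple eigenvalue $\lambda_0$ of $A$; by continuity the hypothesis $|y^Hx|<1$ at the perturbed pair follows from $|y_0^Hx_0|<1$ once $\eps$ is small enough. The proof of Theorem~\ref{thm:2} then supplies exactly two equilibria $E^\eps_\pm$ of \eqref{odefull} on $\mc S_1$. Since $|y_0^Hx_0|<1$ forces $S_0\ne O$ of rank~$2$ via Theorem~\ref{thm:1}(2), the limiting equilibria $E^0_\pm=\pm S_0/\|S_0\|_F$ sit in the smooth portion of $\mc M_2$; combined with the first step, this means the two $E^\eps_\pm$ are precisely the stationary points of \eqref{eq:r2ode} on $\mc S_1\cap\mc M_2$ for $\eps$ small.

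The only genuine subtlety I expect is keeping track of which manifold the Implicit Function Theorem is being applied on: a priori Theorem~\ref{thm:2} only provides uniqueness within $\mc S_1$, whereas here I need uniqueness within $\mc S_1\cap\mc M_2$. This is however automatic once the equivalence of the two stationary conditions has been established in the first step, because any candidate in $\mc S_1\cap\mc M_2$ is a fortiori a candidate in $\mc S_1$. Hence nothing beyond a careful bookkeeping of the ambient manifold is required to close the argument.
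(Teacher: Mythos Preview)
Your proposal is correct and follows exactly the route the paper sketches: use Theorem~\ref{th:eprops} to identify the stationary points of \eqref{eq:r2ode} with those of \eqref{odefull}, and then invoke the perturbative argument of Theorem~\ref{thm:2}. The paper itself offers no proof beyond the sentence preceding the corollary, and you have supplied precisely the details that sentence implies, including the converse direction (equilibria of \eqref{odefull} with $|y^Hx|<1$ have rank~$2$ and hence lie in $\mc M_2$) and the bookkeeping on the ambient manifold.
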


\section{The real structured case}
\label{sec:5}

We now assume that the matrix $A$ is real and we restrict the perturbations $E$ to be real as well. To our knowledge there are no methods to compute the most ill conditioned eigenvalue in the real $\eps$-pseudospectrum, 
\be
\label{eq:realps}
\Lameps^{\R}(A) = \{ \lambda \in \C : \lambda \in \Lambda\left(A+E\right) \ \mbox{for some} \  E \in \R^{n\times n} \ \mbox{with} \ \|E\|_F \le \eps\}.
\ee 
that is the eigenvalue of $A + \eps E$ to which corresponds $\min y ^Hx$.

We denote by $\mc R_1$ the unitary hyper-sphere in $\bb R^{n\times n}$ and fix $\widetilde \eps_0>0$ such that for any $\eps\in [0,\widetilde\eps_0]$ and $E\in \mc R_1$ the matrix $A+\eps E$ has a simple eigenvalue $\lambda$ close to $\lambda_0$. The same reasoning as in the proof of Lemma \ref{lem:2.1} gives, for a smooth path $E(t)\in \mc R_1$,
\be
\label{xyr}
\frac{\d}{\d t} |y ^Hx| = \eps\,|y ^Hx| \, \big\lan \dot E,\Re(S)\big\ran\;,
\ee
from which the steepest descent direction is given by the variational problem,
\[
\mathrm{arg} \min_{\substack{D\in\mc R_1 \\ \lan D,E\ran=0}} = - \mu\big(\Re(S)-\lan E,\Re(S)\ran E\big)\;,
\]
where $\mu$ is the normalization constant. Note that the matrix $\Re (S)$ has rank not greater than $4$. Clearly, for a real eigenvalue $S$ is real and the situation is identical to the one considered in the unstructured case $\K=\C$;
so the peculiar difference arises when we consider non-real eigenvalues.

Let ${ \mc M}_4$ be the manifold of the real matrices of rank-$4$. The matrix representations both in ${ \mc M}_4$ and in the tangent space $T_E{ \mc M}_4$ are analogous to \eqref{p:1}, \eqref{p:2}, provided that $U,V \in \R^{n\times 4}$ have orthonormal columns and $T \in \R^{4\times4}$ is nonsingular, see \cite[Sect.~2.1]{KL07}. More precisely, any rank-$4$ matrix of order $n$ can be written in the form,
\be
\label{factors}
E= U T V^{\rm T}\;,
\ee
with, now, $U$ and $V$ such that $U^{\rm T}U=I_4$ and $V^{\rm T}V=I_4$, where $I_4$ is the identity matrix of order 4, and $T \in \R^{4\times 4}$ is nonsingular. As before, since this decomposition is not unique, we use a unique decomposition on the tangent space. For a given choice of $U,V,T$ any matrix $\delta E\in T_{E}\mathcal{M}_4$ can be uniquely written as 
\[
\delta E= \delta U T V^{\rm T} + U \delta T V^{\rm T} + U T \delta V^{\rm T}\;,
\]
with $U^{\rm T}\delta U=0$, $V^{\rm T}\delta V=0$. Accordingly, the orthogonal projection of a matrix $Z\in \bb R^{n\times n}$ onto the tangent space $T_E{ \mc M}_4$  is defined by 
\be
\label{projeq}
\widetilde P_E(Z)=Z-P_U Z P_V\;,
\ee
where $P_U=(I-U U^\tp)$ and $P_V=(I-V V^{\rm T})$.

\section{System of ODEs in the real case}
\label{sec:6}

Given $\eps\in (0,\widetilde \eps_0]$ the role of the differential system in \eqref{odefull} is now played by
\be
\label{realodefull}
\dot E = - \Re (S) + \lan E,\Re (S)\ran E\;, \qquad E\in \mc R_1\;.
\ee
More precisely, the right-hand side of \eqref{realodefull} is antiparallel to the projection onto the tangent space $\mc T_E\mc R_1$ of the gradient of $|y ^Hx|$ and
\be
\label{decreal}
\frac{\d}{\d t} |y ^Hx| = - \eps\, |y ^Hx| \, \|\Re(S)-\lan E,\Re(S)\ran E\|_F^2\;.
\ee

The proof of Theorem \ref{thm:2} is easily adapted to the real case. Therefore, under the same hypothesis, the system \eqref{realodefull} has only two stationary points, that correspond to the minimum and the maximum of $|y ^Hx|$ on $\mc R_1$, respectively.

A natural question is concerned with the possibility of $\Re(S)$ to vanish.
We have the following result concerning the matrix $\Re(S)$.

\begin{theorem}
\label{th:ReS}
Assume that the matrix $B$ is real and has a pair of simple complex conjugate eigenvalues
$\lambda$ and $\bar\lambda$. Let $y$ and $x$ be its left and right eigenvectors
associated to $\lambda$ such that $y ^H x < 1$. Let $G$ be the G-inverse of $B - \lambda I$ and $S$ be given by \eqref{S}. Then $\Re(S)$ is different from zero.
\end{theorem}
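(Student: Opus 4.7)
The plan is to argue by contradiction: assume $\Re(S) = O$, so that $\bar S = -S$ and $S + \bar S = O$, and derive a conflict with $|y^H x| < 1$. Write $S = yv^H + u x^H$ with $u := G^H x$, $v := Gy$, and set $\phi := y^H x$, $\eta := y^H \bar y$.

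The main step is to show that the assumption forces $u$ and $v$ to be scalar multiples of the conjugate eigenvectors $\bar y$ and $\bar x$. I would compute $(S + \bar S) y = y(v^H y) + u(x^H y) + \bar y(\bar v^H y) + \bar u(\bar x^H y)$. Two coefficients vanish: $v^H y = y^H G^H y = 0$ because $G^H y = 0$, and $\bar x^H y = 0$ by biorthogonality (the left eigenvector $\bar y$ of the real matrix $B$ for $\bar\lambda$ is orthogonal to the right eigenvector $x$ for $\lambda$, and $\bar x^H y = \sum_i x_i y_i = \bar y^H x$). Multiplying the identity $\bar y^H (B - \lambda I) = (\bar\lambda - \lambda)\bar y^H$ on the right by $G$ and using $(B - \lambda I)G = I - xy^H/\phi$ gives $G^H \bar y = \bar y/(\lambda - \bar\lambda)$; hence $\bar v^H y = \overline{v^H \bar y} = -\bar\eta/(\lambda - \bar\lambda)$. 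What remains is
\[
u = c\,\bar y, \qquad c := \frac{\bar\eta}{\bar\phi(\lambda - \bar\lambda)},
\]
and a symmetric computation from $x^H(S + \bar S) = 0$ (using $x^H u = x^H G^H x = 0$) produces $v = -c\,\bar x$.

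Next, I apply $B - \lambda I$ on the left of $Gy = -c\bar x$, using $(B - \lambda I) G = I - P_1$ with $P_1 = xy^H/\phi$ the spectral projector onto the $\lambda$-eigenspace, together with $(B - \lambda I)\bar x = (\bar\lambda - \lambda)\bar x$; this produces $y = x/\phi + c(\lambda - \bar\lambda)\bar x$. The analogous treatment of $G^H x = c\bar y$, applying $B^\tp - \bar\lambda I$ on the left and using $B^\tp \bar y = \lambda \bar y$ (valid since $B$ is real), gives $x = y/\bar\phi + c(\lambda - \bar\lambda)\bar y$. Left-multiplying the second identity by $y^H$ yields $\phi = 1/\bar\phi + c(\lambda - \bar\lambda)\eta$, and substituting $c(\lambda - \bar\lambda) = \bar\eta/\bar\phi$ collapses the equation to
\[
|y^H x|^2 = \phi\bar\phi = 1 + |\eta|^2 \;\ge\; 1,
\]
contradicting $|y^H x| < 1$. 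The principal obstacle is the first step, where the seemingly generic identity $\bar S = -S$ must be shown to pin the rank-one pieces of $S$ exactly onto the $\bar\lambda$-eigenvectors; once this alignment is secured, the second step is a mechanical linear-algebra computation.
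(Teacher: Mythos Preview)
Your argument is correct and reaches the contradiction by a route that is shorter and more elementary than the paper's. Both proofs begin identically: from $\bar S=-S$ one deduces that $G^Hx$ is a scalar multiple of $\bar y$ and $Gy$ is a scalar multiple of $\bar x$. At that point the paper observes that $\mathrm{span}(x,\bar x)=\mathrm{span}(y,\bar y)$, invokes biorthogonality to see that this invariant plane is orthogonal to the remaining eigenvectors, passes to a real Schur form to isolate the $2\times 2$ block carrying $\lambda,\bar\lambda$, and then computes $S$ explicitly on that block to exhibit a nonzero real entry. You instead apply $(B-\lambda I)^H$ directly to the identity $G^Hx=c\bar y$, obtaining $x=y/\bar\phi+c(\lambda-\bar\lambda)\bar y$, and a single inner product with $y$ collapses this to $|y^Hx|^2=1+|y^H\bar y|^2\ge 1$. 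This bypasses the block-diagonalization and the explicit group-inverse computation entirely; the price is that you do not get the paper's side information that, in the reduced coordinates, $S$ is in fact \emph{real} (not merely not purely imaginary).

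One small slip: the claim that the ``symmetric computation'' yields $v=-c\bar x$ with the \emph{same} constant $c$ is not quite right; working through $x^H(S+\bar S)=0$ produces a multiple of $\bar x$ whose coefficient involves $x^H\bar x$ rather than $y^H\bar y$. This is harmless, since your contradiction uses only the relation $u=c\bar y$ and the explicit value $c(\lambda-\bar\lambda)=\bar\eta/\bar\phi$, both of which you derived correctly. The first displayed identity $y=x/\phi+c(\lambda-\bar\lambda)\bar x$ is therefore not needed and can be dropped.
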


\begin{proof}
First observe that the eigenvectors $x$ and $y$ are necessarily genuinely complex vectors, that is $\Re(x) \ne 0$, $\Im(x) \ne 0$, $\Re(y) \ne 0$, and $\Im(y) \ne 0$. Let us denote the range of a matrix $M$ as $\cR(M)$. By definition of $S$ (see (\ref{S})) we have $
\cR(S) = {\rm span}\left( y, G ^H x \right)$.

We prove the result by contradiction. Assume that $S$ is purely imaginary, that is $
\Re(S) = {\rm O}$. Under this assumption, recalling that $S$ is a rank-$2$ matrix, we have that $v \in \cR(S)$ implies $\bar{v} \in \cR(S)$, and therefore
$$
\cR(S) = {\rm span} \left( y, \conj{y}, G ^Hx, \conj{G ^Hx} \right)
$$
has dimension $2$. Being $y$ and $G ^Hx$ linearly independent, we get $\conj{y} =\alpha y + \beta G ^H x$. A left premultiplication by $x ^H$ gives
\begin{eqnarray}
x ^H\conj{y} & = & \alpha x ^Hy + \beta x ^HG ^H x \qquad \Longrightarrow \quad \alpha = 0\;,
\label{eq:conjy2}
\nonumber
\end{eqnarray}
which is due to the fact that (i) \ $x ^H\conj{y} = 0$, by well-known bi-orthogonality of left and right eigenvectors, (ii) \ $G x = 0$, a property of the group inverse $G$, and (iii) \ 
$x ^H y \neq 0$ by simplicity of $\lambda$.
This implies $\conj{y} \propto  G ^Hx $. In a specular way, we have that 
$$
\cR(S^{\rm T}) = {\rm span} \left( x, \conj{x}, G y, \conj{G y} \right)
$$
has dimension $2$. Proceeding in the same way we obtain that $\bar{x} \propto G y$.

Now recall that (see \cite{MS88}) a vector $v$ is an eigenvector of $B-\lambda I$ corresponding 
to the eigenvalue $\mu$ if and only if $v$ is an eigenvector of $G$ corresponding to the eigenvalue  
$\mu^\#$, where $\mu^\# = 1/\mu$ if $\mu \neq 0$ and $\mu^\#=0$ if $\mu = 0$. 
To recap we have
\begin{eqnarray}
&& G y = \gamma \conj{x}\;, \qquad G \conj{x} = \iu \frac{1}{2 \Im(\lambda)} \conj{x}\;,
\label{eq:d1}
\\ 
&& G ^H x = \eta \conj{y}\;, \qquad G ^H \conj{y} = -\iu \frac{1}{2 \Im (\lambda)} \conj{y}\;,  
\label{eq:d2}
\end{eqnarray}
with $\gamma \neq 0$ and $\eta \neq 0$.

Since $x \in \ker(G)$ and $y \in \ker(G ^H)$ are the only vectors in the kernels of $G$ and $G ^H$,
respectively, we deduce by (\ref{eq:d1}) and (\ref{eq:d2}),
\begin{eqnarray*}
&& x \propto \frac{1}{\gamma} y + 2\,\iu\,\Im(\lambda) \conj{x}\;, \qquad 
y \propto \frac{1}{\eta} x - 2\,\iu\,\Im(\lambda) \conj{y}\;,  
\end{eqnarray*}
which imply ${\rm span} \left( x, \conj{x} \right) = {\rm span} \left( y, \conj{y} \right)$. The previous implies, by bi-orthogonality of left and right eigenvectors, that
the set $X=\{ x,\conj{x}\}$ is orthogonal to the set $\widehat{X}$ consisting of the remaining 
$n-2$ right eigenvectors of $B$ and similarly the set $Y=\{y, \conj{y}\}$ is orthogonal to the set
$\widehat{Y}$ of the remaining $n-2$ left eigenvectors of $B$. Note that ${\rm span}(X)$ and 
${\rm span}(\widehat{X})$ are right-invariant subspaces of both $B$ and $B^{\rm T}$.

Denote by ${\rm Orth}(C)$ a real orthonormal basis for the range of $C$ and 
define $U = {\rm Orth}(X) \in \R^{n,2}$ (as determined by the procedure to get
the Schur canonical form of $B$) and $V={\rm Orth}(\widehat{X}) \in \R^{n,n-2}$. 
Set $Q=\left(U, V \right) \in \R^{n,n}$, which implies $Q$ is an orthogonal matrix. Now 
consider the similarity transformation associated to $Q$,
\[
\widetilde{B} = Q^{\rm T} B Q =
\left( 
\begin{array}{cc}
B_1 & {\bf O}^{\rm T} \\
{\bf O} & B_2  
\end{array}
\right) \;, 
\]
where ${\bf O}$ stands for the $(n-2) \times 2$-dimensional zero matrix, $B_1 \in \R^{2,2}$ and $B_2 \in \R^{n-2,n-2}$. This means that $\widetilde{B}$ is block-diagonal and the matrix 
\begin{equation}
B_1 = \begin{pmatrix} 
\varrho    & \sigma \\
{}-\tau  & \varrho  
\end{pmatrix}
\label{eq:B1}
\end{equation}
is such that $\varrho = \Re(\lambda)$ and $\sigma > 0, \tau > 0$ with 
$\sigma \tau = \Im(\lambda)^2 > 0$ so that $B_1$ has eigenvalues $\lambda$ 
and $\conj{\lambda}$.
If $\sigma=\tau$ then $B_1$ is normal, which implies that the pair of right 
and left eigenvectors associated to $\lambda$, say $\widetilde{x}, \widetilde{y}$
(scaled to have unit $2$-norm and real and positive Hermitian scalar product)
is such that $\widetilde{y} ^H \widetilde{x} = 1$. 
Since $x = Q \widetilde{x}$ and $y = Q \widetilde{y}$, the orthogonality of $Q$ implies $y ^Hx=1$,
which gives a contradiction. As a consequence we can assume $\tau \ne \sigma$.

By the properties of the G-inverse we have that
\[
\widetilde{G} = Q^{\rm T} G Q =
\left( 
\begin{array}{cc}
G_1 & {\bf O}^{\rm T} \\
{\bf O} & G_2  
\end{array}
\right)\,,
\]
where $G_1$ is the group inverse of $B_1-\lambda I$ and $G_2$ is the inverse of $B_2-\lambda I$, which is nonsingular.
It is direct to verify the following formula for the G-inverse, by simply checking the three conditions in Definition \ref{def:groupinv},
\[
G_1 =
\left(
\begin{array}{rr}
 \frac{\iu}{4 \sqrt{\sigma \tau}} & -\frac{1}{4 \tau} \\
 \frac{1}{4 \sigma} & \frac{\iu}{4 \sqrt{\sigma \tau}}
\end{array}
\right)\,.
\]
It follows that also $Q^{\rm T} S Q$ is block triangular so that we write
\begin{eqnarray}
\widetilde{S} = Q^{\rm T} S Q =
\left( 
\begin{array}{cc}
S_1 & {\bf O}^{\rm T} \\
{\bf O} & S_2  
\end{array}
\right)\,,
\label{eq:QtSQ}
\end{eqnarray}
with
\begin{eqnarray}
S_1 & = & \widetilde{y}_1 \widetilde{y}_1 ^H G_1 ^H + G_1 ^H  \widetilde{x}_1 \widetilde{x}_1 ^H\;,
\label{eq:S1}
\end{eqnarray}
where $\widetilde{y}_1 \in \C^2$ and $\widetilde{x}_1 \in \C^2$ are the projections on 
${\rm span}(e_1,e_2)$ (the subspace spanned by the first two vectors of the canonical basis) 
of the eigenvectors of $\widetilde{B}$ associated to $\lambda$, that is 
$\widetilde{x} = Q^{\rm T} x$ and $\widetilde{y} = Q^{\rm T} y$, 
\[
\widetilde{x} = \nu_x^{-1} \left( \begin{array}{ccccc}
{} \iu \frac{\sqrt{\sigma}}{\sqrt{\tau}} & 1 & 0 & \ldots & 0 \end{array} \right)^{\rm T},
\qquad \widetilde{y} = \nu_y^{-1} \left( \begin{array}{ccccc}
{}-\iu \frac{\sqrt{\tau}}{\sqrt{\sigma}} & 1 & 0 & \ldots & 0 \end{array} \right)^{\rm T},
\]
where $\nu_x = \sqrt{\frac{\sigma}{\tau}+1}$ and $\nu_y = \sqrt{\frac{\tau}{\sigma}+1}$ are such that 
$\| \widetilde{x} \| = \| \widetilde{y} \| = 1$ and $\widetilde{x} ^H \widetilde{y} \in \R^+$.
Finally, we obtain
\begin{eqnarray}
S_1 & = & \left(
\begin{array}{ll}
 0 & \frac{\tau-\sigma}{2 \sigma \left(\sigma+\tau \right)} \\
 \frac{\tau-\sigma}{2 \tau \left(\sigma+\tau\right)} & 0
\end{array}
\right)\,,
\nonumber
\end{eqnarray}
which is real and cannot vanish due to the fact that $\sigma \neq \tau$.

Recalling that $Q$ is real, if $S$ were purely imaginary then $S_1$ would be purely imaginary as well, which gives a contradiction. 

We remark that it can also be shown that $S_2 = O$ in (\ref{eq:QtSQ}).
\end{proof}

\begin{remark}\rm
Note that when $A$ is real and we compute $S$ for a complex eigenvalue, it can
occur that $S$ is real. The simplest example is given by the matrix {\rm (\ref{eq:B1})}.
\end{remark}

According to Theorem \ref{th:ReS} we have that $\Re(S) \neq 0$ for every path of
genuinely complex eigenvalues. Based on this result we can characterize stationary
points of (\ref{realodefull}) to have rank (at most) $4$.

This suggests to project the ODE on the rank-$4$ manifold of real matrices. 

\subsection{Projected system of ODEs in the real case}
\label{subsec:6.1}

The following theorem characterizes the projected system onto the tangent space $\mc T_E {\mc M}_4$.

\begin{theorem}[The real projected system]
\label{thm:3.1}
Given $\eps\in (0,\widetilde\eps_0]$, consider the differential system,
\be
\label{realeq:r2ode}
\dot E = - \widetilde P_E\left( \Re(S) \right) + \lan E, \Re(S) \ran E\;, \qquad E\in \mc R_1\cap{ \mc M}_4\;,
\ee
where the orthogonal projection $\widetilde P_E$ is defined in \eqref{projeq}. Then, the right-hand side of \eqref{realeq:r2ode} is antiparallel to the projection onto the tangent space $\mc T_E\mc R_1\cap \mc T_E{ \mc M}_4$ of the gradient of $|y ^Hx|$. More precisely,
\be
\label{decr2.1}
\frac{\d}{\d t} |y ^Hx| = - \eps\, |y ^Hx| \, \|\widetilde P_E(S)-\lan E,\Re(S)\ran E\|_F^2\;.
\ee
\end{theorem}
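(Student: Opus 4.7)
The plan is to mirror the proof of Theorem~\ref{thm:3} from the complex case, with the substitutions $\mc S_1\leadsto\mc R_1$, $\mc M_2\leadsto\mc M_4$, $P_E\leadsto \widetilde P_E$, $S\leadsto \Re(S)$, and $\Re\lan\cdot,\cdot\ran\leadsto\lan\cdot,\cdot\ran$ (the latter because all matrices now live in the real inner product space). The starting ingredient is formula \eqref{xyr}, which plays the role that \eqref{dy*x} had in the complex setting. Nothing in the complex proof used structure beyond: orthogonality of the projection onto the tangent manifold, the identity $E=P_E(E)$, and the unit-sphere constraint $\|E\|_F=1$; all of these survive in the real setting.

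First I would check that the vector field defining \eqref{realeq:r2ode} is well posed, i.e., $\dot E\in\mc T_E\mc R_1\cap\mc T_E{\mc M}_4$. The inclusion $\dot E\in\mc T_E{\mc M}_4$ is immediate since $\widetilde P_E(\Re(S))\in\mc T_E{\mc M}_4$ by construction and $E=\widetilde P_E(E)$. Computing $\lan\dot E,E\ran$, using $\|E\|_F=1$ and the projection identity $\lan\widetilde P_E(\Re(S)),E\ran=\lan\widetilde P_E(\Re(S)),\widetilde P_E(E)\ran=\lan\Re(S),E\ran$, yields $\lan\dot E,E\ran=0$, hence $\dot E\in\mc T_E\mc R_1$. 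Next I would exhibit the steepest descent direction: by \eqref{xyr}, minimizing $\frac{\d}{\d t}|y^Hx|$ under the constraints $\|D\|_F=1$ and $D\in\mc T_E\mc R_1\cap\mc T_E{\mc M}_4$ amounts to minimizing $\lan D,\Re(S)\ran$. Since for any $D\in\mc T_E{\mc M}_4$ the orthogonality of $\widetilde P_E$ gives $\lan D,\Re(S)\ran=\lan D,\widetilde P_E(\Re(S))\ran$, the usual Lagrange-multiplier argument (subtracting the $E$-component to enforce $\lan D,E\ran=0$ and then normalizing) produces
\[
D=-\frac{\widetilde P_E(\Re(S))-\lan E,\Re(S)\ran E}{\|\widetilde P_E(\Re(S))-\lan E,\Re(S)\ran E\|_F},
\]
which is exactly the direction of the right-hand side of \eqref{realeq:r2ode} up to a positive normalization factor, establishing the claimed antiparallelism.

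Finally, the derivative formula \eqref{decr2.1} follows by plugging $\dot E=-\widetilde P_E(\Re(S))+\lan E,\Re(S)\ran E$ into \eqref{xyr} and simplifying via $\lan\dot E,\Re(S)\ran=\lan\dot E,\widetilde P_E(\Re(S))\ran=-\|\widetilde P_E(\Re(S))-\lan E,\Re(S)\ran E\|_F^2$, where the first equality again uses that $\dot E\in\mc T_E{\mc M}_4$ and $\widetilde P_E$ is an orthogonal projection. I do not expect a serious obstacle here; the only recurring subtlety is consistently using the identity $\lan D,\Re(S)\ran=\lan D,\widetilde P_E(\Re(S))\ran$ for $D\in\mc T_E{\mc M}_4$ to shuttle the projection in and out of inner products, which is the same manipulation exploited in the complex analogue.
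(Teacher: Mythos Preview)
Your proposal is correct and follows essentially the same approach as the paper: both start from \eqref{xyr}, reduce the steepest descent problem to minimizing $\lan D,\Re(S)\ran$ over $D\in\mc T_E\mc R_1\cap\mc T_E{\mc M}_4$, invoke the projection identity $\lan D,\Re(S)\ran=\lan D,\widetilde P_E(\Re(S))\ran$ for $D\in\mc T_E{\mc M}_4$, and read off the normalized minimizer. If anything, your write-up is more complete than the paper's, since you explicitly verify well-posedness ($\dot E\in\mc T_E\mc R_1\cap\mc T_E{\mc M}_4$) and spell out the computation leading to \eqref{decr2.1}, steps the paper leaves implicit here by analogy with the complex case.
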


\begin{proof} 
By \eqref{xyr}, the steepest descent direction is given by the variational problem,
\[
\mathrm{arg} \min_{\substack{\|D\|_F=1 \\ D\in\mc T_E\mc R_1\cap \mc T_E{ \mc M}_4}}\big\lan D, \Re(S)\big\ran\;.
\]
Since $\lan D, \Re(S)\ran = \lan D, \widetilde P_E(\Re(S)) \ran$ for any $D\in \mc T_E{ \mc M}_4$, this problem has solution,
\[
D = - \frac{\widetilde P_E(\Re(S))-\lan E,\Re(S)\ran E}{\|\widetilde P_E(\Re(S))-\lan E,\Re(S)\ran E\|_F}\;.
\]
\end{proof}

Based on the previous theorem, our aim is that of writing a system of differential equations on the manifold of rank-$4$ matrices for the projected system. Given $\eps\in (0,\widetilde\eps_0]$, consider the differential system (\ref{realeq:r2ode}). To obtain the differential equation in a form that uses the factors in the decomposition \eqref{factors} rather than the full $n\times n$ matrix $E$, we use the following result.

\begin{lemma}
\cite[Prop.~2.1]{KL07}
\label{lem:usv}  
For $E=UTV^\tp \in \M_4$ with nonsingular $T\in\R^{4\times 4}$ and with $U\in\R^{n\times 4}$ and $V\in\R^{n\times 4}$ having orthonormal columns, the equation $\dot E=\widetilde P_E(Z)$ is equivalent to 
$
\dot E = \dot U T V^\tp  + U \dot T V^\tp  + U T\dot V^\tp , 
$
where
\be
\label{Rodes} 
\left\{\begin{array}{l}
\dot T = U^\tp  Z V\;, \\[1mm] 
\dot U = (I-UU^\tp) Z V T^{-1}\;,\\[1mm] 
\dot V = (I-VV^\tp) Z^\tp  U T^{-\tp}\;.
\end{array}\right.
\ee
\end{lemma}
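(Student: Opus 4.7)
The plan is to extract the three factor ODEs from $\dot E = \widetilde P_E(Z)$ by appropriate left and right projections, and then verify the converse implication by direct substitution.

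Differentiating $E = U T V^\tp$ in time gives $\dot E = \dot U T V^\tp + U \dot T V^\tp + U T \dot V^\tp$, and enforcing uniqueness of the tangent-space decomposition amounts to the gauge conditions $U^\tp \dot U = 0$ and $V^\tp \dot V = 0$ (which are consistent with, but strictly stronger than, the skew-symmetric relations one gets by differentiating $U^\tp U = V^\tp V = I_4$). I would also record two immediate identities: $U^\tp \widetilde P_E(Z) = U^\tp Z$ and $\widetilde P_E(Z) V = Z V$, both coming at once from $\widetilde P_E(Z) = Z - (I - U U^\tp) Z (I - V V^\tp)$ and the fact that $U^\tp (I - U U^\tp) = 0$, $(I - V V^\tp) V = 0$.

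To isolate $\dot T$, I would left-multiply $\dot E = \widetilde P_E(Z)$ by $U^\tp$ and right-multiply by $V$: the first and third terms of $\dot E$ die via the gauge conditions and orthonormality, leaving $\dot T$ on the left-hand side and $U^\tp Z V$ on the right. To isolate $\dot U$, I would left-multiply instead by $(I - U U^\tp)$ and right-multiply by $V$: only the first term of $\dot E$ survives, yielding $\dot U T = (I - U U^\tp) Z V$, and the invertibility of $T$ produces the claimed formula. Symmetrically, left-multiplication by $U^\tp$ and right-multiplication by $(I - V V^\tp)$ gives $T \dot V^\tp = U^\tp Z (I - V V^\tp)$, whence the expression for $\dot V$ after transposition.

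For the converse, I would substitute the three formulas back into $\dot E = \dot U T V^\tp + U \dot T V^\tp + U T \dot V^\tp$; after cancellation the sum collapses to $U U^\tp Z + (I - U U^\tp) Z V V^\tp$, which is exactly $\widetilde P_E(Z)$. No step is genuinely hard: the only point that deserves care — rather than being an obstacle — is that the three ODEs are singled out, among the infinitely many possible splittings representing the same $\dot E$, by the gauge choice $U^\tp \dot U = V^\tp \dot V = 0$, and one should verify a posteriori that the right-hand sides produced by the extraction procedure do satisfy these gauge conditions (which is immediate, since $U^\tp (I - U U^\tp) = 0$ and $V^\tp (I - V V^\tp) = 0$).
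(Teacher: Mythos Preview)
Your proof is correct. Note that the paper does not actually prove this lemma: it is quoted verbatim from \cite[Prop.~2.1]{KL07} and used without proof. Your argument---extracting $\dot T$, $\dot U$, $\dot V$ by suitable left/right multiplications exploiting the gauge conditions $U^\tp\dot U=0$, $V^\tp\dot V=0$ and the orthonormality of $U,V$, then checking the converse by direct substitution---is exactly the standard derivation, and is essentially how the result is established in \cite{KL07}.
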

In order to explicit (\ref{Rodes}) we write
\be 
\Re(S)= \Re(y y ^HG ^H + G ^H x x ^H)= Y W^{\rm T} + Z X^{\rm T}\;,
\ee
where 
\be
\left\{
\begin{array}{lll} 
X=(\Re(x), \Im(x))\;, &  \quad & Y=(\Re(y), \Im(y)) \;, \\[1mm]
W=(\Re(G y), \Im(G y))\;, & \quad & Z=(\Re(G ^Hx), \Im(G ^Hx))\;,
\end{array} 
\right.
\ee
are matrices in $\R^{n\times 2}$.

By simple algebraic manipulations we obtain the following system of ODEs,
\be
\label{ode-utv4} 
\left\{\begin{array}{l}
\dot T = {}-\left( P \S^\tp + R Q^\tp \right)
+ {\rm trace} \left( \S^\tp T^\tp P + Q^\tp T^\tp R \right) T \;,\\[1mm] 
\dot U = {}-\bigl( (Y - U P) \S^\tp + (Z - U R) Q^\tp \bigr) T^{-1}  \;, \\[1mm]
\dot V = {}-\bigl(  (W - V \S) P^\tp + (X - V Q) R^\tp  \bigr) T^{-\tp} \;,
\end{array}\right.
\ee
where
\be
\label{eq:Rpqrs}
P = U^{\rm T} Y\;, \quad
Q = V^{\rm T} X\;, \quad
R = U^{\rm T} Z\;, \quad
\S = V^{\rm T} W\;,
\ee
are matrices in $\R^{4\times 2}$. 

We provide now a characterizing result for stationary points of \eqref{realeq:r2ode}.

\begin{lemma}\label{lem:statr}
Given $\eps\in (0,\widetilde \eps_0]$, assume $E\in\mc R_1\cap\mc M_4$ is a stationary point of \eqref{realeq:r2ode}  such that $\lambda$ is not an eigenvalue of $A$ and $|x ^H y| < 1$. Then $\widetilde P_E(\Re(S)) \neq 0$.
\end{lemma}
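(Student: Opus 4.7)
The plan is to mimic the argument of Lemma~\ref{lem:stat} (the complex analogue), proceeding by contradiction. Assuming $\widetilde P_E(\Re(S)) = O$ and expanding the definition of $\widetilde P_E$ in \eqref{projeq}, I would first extract the two orthogonality conditions $U^\tp \Re(S) = O$ and $\Re(S) V = O$ by multiplying on the left by $U^\tp$ and on the right by $V$, respectively. Using the real factorization $\Re(S) = YW^\tp + ZX^\tp$ from Section~\ref{sec:6}, these become
\[
PW^\tp + RX^\tp = O\;, \qquad Y\S^\tp + ZQ^\tp = O\;,
\]
in terms of the matrices defined in \eqref{eq:Rpqrs}.

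Next, I would substitute these into the equilibrium versions of the $\dot U$- and $\dot V$-equations of \eqref{ode-utv4}; using the invertibility of $T$ and the full column rank of $U$ and $V$, a short computation yields the further identity $P\S^\tp + RQ^\tp = O$. The core of the argument is then to conclude that either $[P, R] = O$ or $[\S, Q] = O$. Rewriting $PW^\tp + RX^\tp = O$ as $[P, R][W, X]^\tp = O$, and symmetrically $Y\S^\tp + ZQ^\tp = O$ as $[Y, Z][\S, Q]^\tp = O$, this reduces to showing that either $[W, X]$ or $[Y, Z]$ in $\R^{n \times 4}$ has full column rank.

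Once either $[P, R] = O$ or $[\S, Q] = O$ is available, the conclusion follows as in the complex case: from $P = U^\tp Y = O$ one gets $U^\tp \Re(y) = U^\tp \Im(y) = 0$, hence $y^H U = 0$, and therefore $y^H E = y^H U T V^\tp = 0$; then $y^H A = y^H(A + \eps E) = \lambda y^H$ contradicts $\lambda \notin \Lambda(A)$. Symmetrically, $Q = V^\tp X = O$ yields $V^\tp x = 0$, hence $Ex = 0$, and the same contradiction.

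The main obstacle will be establishing the full column rank of $[W, X]$ (or of $[Y, Z]$). Linear independence of $\Re x, \Im x$ is immediate since $\lambda \ne \bar\lambda$ (the real-eigenvalue regime is covered by Lemma~\ref{lem:stat}) makes $x, \bar x$ linearly independent over $\C$, and the hypothesis $|x^H y| < 1$ combined with Theorem~\ref{thm:1} guarantees that $Gy \ne 0$ and is not a $\C$-multiple of $x$. The delicate point is to rule out intermediate degeneracies in which $\mathrm{span}_\R(\Re Gy, \Im Gy)$ intersects $\mathrm{span}_\R(\Re x, \Im x)$ nontrivially: a Schur block-diagonalization in the spirit of the proof of Theorem~\ref{th:ReS} should exclude the full containment (which would force $|x^H y| = 1$), while the intermediate rank-$3$ configurations will likely need to be handled by combining the three stationarity identities derived above together with the constraints $\|E\|_F = 1$ and $\mathrm{rank}(E) = 4$.
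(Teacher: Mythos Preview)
Your plan has a genuine gap precisely where you flag it: the full column rank of $[W,X]$ or $[Y,Z]$ is never established, and the ideas you sketch (Schur block-diagonalization, ad hoc handling of rank-$3$ configurations) do not close it. There is no a priori reason why $\Re x,\Im x,\Re(Gy),\Im(Gy)$ should be linearly independent over $\R$; the Schur argument behind Theorem~\ref{th:ReS} rules out the extremal situation $|y^Hx|=1$ but does not control the intermediate ranks you need. Note also that your ``further identity'' $P\S^\tp+RQ^\tp=O$ is not new information: it is just $U^\tp\Re(S)V=O$, which already follows from $U^\tp\Re(S)=O$ by right-multiplying by $V$, so invoking the $\dot U,\dot V$ equations at that stage buys nothing.

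The paper avoids the rank-$4$ obstacle altogether by a case split. Rather than trying to kill $[P,R]$ or $[\S,Q]$ globally, it separates into four cases according to which pairs among $P,Q,R,\S$ vanish; three of these (those containing $P=O$ or $Q=O$) immediately give $y^HE=0$ or $Ex=0$, contradicting $\lambda\notin\Lambda(A)$. The only substantive case is $\S=O$, $R=O$: substituting these back into \eqref{ode-utv4} collapses the stationarity conditions $\dot U=\dot V=0$ to the decoupled equations $ZQ^\tp=O$ and $WP^\tp=O$. At this point only a \emph{rank-$2$} claim is needed---that $Z=(\Re(G^Hx),\Im(G^Hx))$ and $W=(\Re(Gy),\Im(Gy))$ each have full column rank---to force $Q=O$ and $P=O$. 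The paper argues this from $G^Hx\ne 0$, $Gy\ne 0$ (a consequence of $|y^Hx|<1$) together with a phase rotation of $(x,y)$. This is strictly weaker than the rank-$4$ condition on $[W,X]$ or $[Y,Z]$ you were aiming for, since no cross-independence between the eigenvector block and the group-inverse block is required. Reorganizing your argument around this case split would remove the obstacle.
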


\begin{proof}
 Assume by contradiction that $\widetilde P_E(S)=0$; then we have,
\be\label{prores}
\Re(S) = \left( I - U U^{\rm T}\right) \Re(S) \left( I - V V^{\rm T} \right)\;.
\ee
With this notation equation \eqref{prores} becomes,
\be
Y W^{\rm T}+ Z X^{\rm T}  = (I-UU^{\rm T})(Y W^{\rm T}+Z X^{\rm T})(I-VV^{\rm T})\;.
\ee
Multiplying by $U^{\rm T}$ to the left and $V$ to the right we obtain $U^{\rm T} Y W^{\rm T} V + U^{\rm T} Z X^{\rm T} V = 0$. By Theorem \ref{th:ReS}  we have that $Y W^{\rm T} +Z X^{\rm T}$ cannot be zero, therefore we have 
the following possibilities, 
\begin{eqnarray} 
\label{p1} 
&& U^{\rm T} Y=0 \quad \mbox{ and }\quad  U^{\rm T} Z=0\;,  \\
\label{p2} 
&& U^{\rm T} Y=0 \quad \mbox{ and }\quad   X^{\rm T} V=0 \;, \\
\label{p3} 
&& W^{\rm T} V=0 \quad \mbox{ and }\quad   X^{\rm T} V=0\;, \\
\label{p4} 
&& W^{\rm T} V=0 \quad \mbox{ and }\quad   U^{\rm T} Z=0\;.
\end{eqnarray}
Each of the conditions \eqref{p1}, \eqref{p2}, and \eqref{p3} imply that $\lambda$ is an eigenvalue of $A$ and this contradicts the hypothesis. Instead, by \eqref{eq:Rpqrs}, condition \eqref{p4} means $\S=0$ and $R=0$; whence, replacing these values in \eqref{ode-utv4}, 
we get,
\be
\left\{\begin{array}{l}
\dot T = 0\;,\\ 
\dot U = {}- Z Q^\tp T^{-1} \;, \\
\dot V = {}-W P^\tp T^{-T} \;.
\end{array}\right.
\ee
In order to have $\dot E = 0$  we need $\dot U = 0$ and $\dot V = 0$.  
Since $ |x ^H y| < 1$ implies $G ^Hx \neq 0$ and $G y \neq 0$, we have
$Z \neq 0$ and $W \neq 0$. Moreover, by a rotation $(x,y) \to (\mathrm{e}^{\mathrm{i}\phi}x,\mathrm{e}^{\mathrm{i}\phi}y)$ we can assume both $G ^Hx$ and $Gy$ genuine complex vectors. 
As a consequence, the previous relations imply $Q=V^\tp X = 0$ and $P^\tp=Y^\tp U=0$, so that $E x = 0$ and $y ^H E = 0$. Therefore $\lambda$ would be an eigenvalue of $A$, which contradicts the assumptions.  This means that $\widetilde P_E(\Re(S)) \neq O$. 
\end{proof}

Consequently, at a stationary point we have $E=\mu \widetilde P_E(\Re(S))$ for some real $ \mu\neq 0$. Therefore,
\begin{eqnarray*} 
\Re(x ^HG E x + y ^H EG ^Hy) &=& \langle \Re(S), E\rangle= \langle E, \Re(S) \rangle\\
			      &=& \langle \mu \widetilde P_E(\Re(S)), \widetilde P_E(\Re(S))\rangle
			      = \frac{1}{\mu} \langle E,E\rangle \neq 0\;,
\end{eqnarray*}						
that means it never vanishes.

\subsection{Stationary points of the projected system of ODEs in the real case}
\label{subsec:6.2}

In order to study stationary points of (\ref{ode-utv4}) we define,
\be
\label{eq:BR}
B = \left( I - U U^\tp\right) \Re(S) \left( I - V V^\tp \right) = 
\left( I - U U^\tp \right) \left( Y W^\tp + Z X^\tp  \right) \left( I - V V^\tp \right)\;.
\ee

\begin{theorem}
Given $\eps\in (0,\widetilde\eps_0)$, assume $E=U T V ^\tp\in\mc R_1\cap{ \mc M}_4$ is a stationary point of \eqref{realeq:r2ode} (or equivalently of \eqref{ode-utv4}) such that $\lambda$ is not an eigenvalue of $A$ and $|y^H x| < 1$. Then it holds $E = \mu \Re(S)$ for some real $\mu$.
\label{th:rstat}
\end{theorem}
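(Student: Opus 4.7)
The plan is to mimic the strategy used in the proof of Theorem \ref{th:eprops} for the complex case, adapted to the rank-$4$ real setting. At a stationary point of \eqref{realeq:r2ode} we have $\widetilde P_E(\Re(S))=\langle E,\Re(S)\rangle E$, and the discussion preceding the statement (together with Lemma \ref{lem:statr}) yields $E=\mu\,\widetilde P_E(\Re(S))$ for some nonzero $\mu\in\R$. Hence the theorem reduces to showing that the matrix $B$ defined in \eqref{eq:BR} vanishes, because then $\widetilde P_E(\Re(S))=\Re(S)$ and the conclusion $E=\mu\,\Re(S)$ follows.

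I would then exploit the stationary point equations for \eqref{ode-utv4}. From $\dot T=0$, $\dot U=0$, $\dot V=0$ and the nonsingularity of $T$ we obtain
\be
(Y-UP)\S^\tp+(Z-UR)Q^\tp=0\,,\qquad (W-V\S)P^\tp+(X-VQ)R^\tp=0\,, \label{plan:dotUV}
\ee
which are real analogues of the equations \eqref{eq:c1}--\eqref{eq:c2}. The assumption that $\lambda\notin\Lambda(A)$ forces $Ex\ne 0$ and $y^HE\ne 0$; together with the fact that $x,y$ are genuine complex vectors (which can be arranged by a phase rotation $(x,y)\to(\mathrm e^{\mathrm i\phi}x,\mathrm e^{\mathrm i\phi}y)$ as in the proof of Lemma \ref{lem:statr}), this ensures that $P=U^\tp Y$ and $Q=V^\tp X$ have full column rank. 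Moreover $|y^Hx|<1$ yields $G^Hx\ne 0$ and $Gy\ne 0$, so $Z\ne 0$ and $W\ne 0$.

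The core of the argument is then a case analysis on \eqref{plan:dotUV}, reading each equation as the sum of two matrices of rank at most $2$ that must cancel. The ``good'' cases, analogous to (1-ii) and (2-ii) in the proof of Theorem \ref{th:eprops}, are $Y=UP$, $Z=UR$ (which give $\Re(S)=UU^\tp\Re(S)$) and $X=VQ$, $W=V\S$ (which give $\Re(S)=\Re(S)VV^\tp$); both immediately yield $B=O$. The remaining possibilities correspond to column-range inclusions of the form $\mathrm{range}(\S)\subseteq\mathrm{range}(Q)$ and $\mathrm{range}(R)\subseteq\mathrm{range}(P)$ (or their mirror images from the second equation of \eqref{plan:dotUV}), or to $\S=0$, $R=0$. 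In each of these subcases I would plug the resulting relations back into the first equation of \eqref{ode-utv4} to conclude that $T=U^\tp\Re(S)V$ reduces to a matrix whose column (or row) span has dimension at most $3$, contradicting the nonsingularity of $T\in\R^{4\times 4}$. The subcase $\S=0,R=0$ is already excluded in the proof of Lemma \ref{lem:statr} because it forces $\lambda$ to be an eigenvalue of $A$.

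The main obstacle I anticipate is the bookkeeping in this case analysis: whereas in Theorem \ref{th:eprops} the ``alignment'' alternatives reduced to scalar proportionalities of the vectors $p,q,r,s$, here $P,Q,R,\S$ are $4\times 2$ matrices, so one must rule out mixed regimes in which, say, $\mathrm{range}(\S)\subseteq\mathrm{range}(Q)$ only partially, while $(Y-UP)$ has a nontrivial intersection with $\mathrm{range}(Z-UR)$. The cleanest way to handle this is to exploit the rank-$2$ bound on each summand in \eqref{plan:dotUV} to express the cancellation via a common $2\times 2$ invertible change of basis, and then verify that every non-trivial such cancellation forces $T$ to drop rank, contradicting $T\in GL_4(\R)$.
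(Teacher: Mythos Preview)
Your plan is essentially the paper's own proof: reduce to $B=O$ via Lemma~\ref{lem:statr}, read off the two vanishing conditions from $\dot U=\dot V=0$ in \eqref{ode-utv4}, and rule out the alignment alternatives by showing they force $T$ to drop rank (the paper obtains $\mathrm{rank}\,T\le 2$ rather than your $\le 3$ in those cases, but either bound suffices).

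One point to correct, which incidentally dissolves the bookkeeping obstacle you anticipate: full column rank of $P$ and $Q$ does \emph{not} follow from $\lambda\notin\Lambda(A)$ together with a phase rotation --- those hypotheses only yield $P\ne 0$, $Q\ne 0$. The clean route is to use the $\dot T$-equation first. At a stationary point one has $cT=P\S^\tp+RQ^\tp=(P\ \ R)\,(\S\ \ Q)^\tp$ with $c\ne 0$, so nonsingularity of $T\in\R^{4\times 4}$ forces both $4\times 4$ blocks $(P\ \ R)$ and $(\S\ \ Q)$ to be invertible. Hence $P,Q,R,\S$ each have rank $2$, and moreover $\mathrm{range}(\S)\cap\mathrm{range}(Q)=\{0\}=\mathrm{range}(P)\cap\mathrm{range}(R)$. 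With this in hand there are no mixed regimes to worry about: in the $\dot U=0$ equation either both summands vanish, whence full rank of $\S$ and $Q$ gives $Y=UP$ and $Z=UR$ (so $B=O$); or both are nonzero, and then their common nonzero row space lies in $\mathrm{range}(\S)\cap\mathrm{range}(Q)=\{0\}$, a contradiction. The $\dot V=0$ equation is handled symmetrically.
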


\begin{proof}
To prove that $\widetilde P_E(\Re(S)) = \Re(S)$ we have to show that the matrix $B$ in (\ref{eq:BR}) is zero. Assume $(T,U,V)$ is a stationary point of \eqref{ode-utv4}. The first equation yields
\begin{eqnarray}
P N^\tp + R Q^\tp & = & c\, T \qquad \Longrightarrow \qquad
U^\tp \Re(S) V = c\, T\;,
\label{eq:Tstat}
\end{eqnarray}
where $c$ is a nonzero constant. 

By the assumptions we have that $Q \neq 0$ and $P \neq 0$, otherwise we would have
either $E x = 0$ or $y^H E = 0$, which would imply that $\lambda$ is an eigenvalue of $A$.
To fulfil the second equation in \eqref{ode-utv4}, we obtain the following possibilities,
\begin{itemize}
\item[(1-i) ]   $N = 0$ and $Z - U R = 0$; this would imply 
\begin{equation}
Z = U U^\tp Z\;.
\label{eq:Zstat}
\end{equation} 
\item[(1-ii) ]  $Y - U P = 0$ and $Z - U R = 0$; this would imply (\ref{eq:Zstat}) and
\begin{equation}
Y = U U^\tp Y\;.
\label{eq:Ystat}
\end{equation} 
\item[(1-iii) ] Both term in the second of \eqref{ode-utv4} do not vanish, which
                implies ${\rm span}(N) = {\rm span}(Q)$; from (\ref{eq:Tstat}) we 
								gather that $T$ has rank-$2$.
\end{itemize}
Similarly, to satisfy the third equation in \eqref{ode-utv4}, we obtain the following
cases,
\begin{itemize}
\item[(2-i) ]   $R=0$ and $W-V N = 0$; this would imply 
\begin{equation}
W = V V^\tp W\;.
\label{eq:Wstat}
\end{equation} 
\item[(2-ii) ]  $V - W N = 0$ and $X - V Q = 0$; this would imply (\ref{eq:Zstat}) and
\begin{equation}
X = V V^\tp X\;.
\label{eq:Xstat}
\end{equation}
\item[(2-iii) ] Both term in the second of \eqref{ode-utv4} do not vanish, which
                implies ${\rm span}(P) = {\rm span}(R)$; from (\ref{eq:Tstat}) we 
								gather that $T$ has rank-$2$.
\end{itemize}
Cases (1-iii) and (2-iii) would imply that $T$ is singular, which can be excluded. 
Cases (1-ii) and (2-ii) imply that $B=0$ (by replacing respectively $Y$ and $Z$
with (\ref{eq:Ystat}) and (\ref{eq:Zstat}) into (\ref{eq:BR}) and $W$ and $X$
with (\ref{eq:Wstat}) and (\ref{eq:Xstat}) into (\ref{eq:BR})).
Similarly assume that (1-i) and (2-i) hold true simultaneously. Again this would imply,
by replacing (\ref{eq:Zstat}) and (\ref{eq:Wstat}) into (\ref{eq:BR}) that $B=0$.
This proves that $\widetilde P_E(\Re(S)) = \Re(S)$. 
\end{proof}

An immediate consequence of Theorem \ref{th:rstat} is the following.

\begin{corollary}
The rank-$4$ stationary points of {\rm (\ref{realodefull})} and {\rm (\ref{ode-utv4})} coincide.
\end{corollary}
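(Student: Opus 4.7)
My plan is to prove the two inclusions separately, relying heavily on Theorem \ref{th:rstat} for one direction and on a tangent-space observation for the other.

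For the forward direction, suppose $E\in \mc R_1\cap \mc M_4$ is a rank-$4$ stationary point of the projected system \eqref{ode-utv4}. If the hypotheses of Theorem \ref{th:rstat} hold (namely $\lambda$ is not an eigenvalue of $A$ and $|y^H x|<1$), then that theorem yields $E=\mu\,\Re(S)$ for some real $\mu\neq 0$, equivalently $\widetilde P_E(\Re(S))=\Re(S)$. Substituting this identity into the projected ODE \eqref{realeq:r2ode} shows that its right-hand side coincides with $-\Re(S)+\langle E,\Re(S)\rangle E$, which is precisely the right-hand side of \eqref{realodefull}. Hence $E$ is also stationary for the full system.

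For the backward direction, suppose $E$ is a rank-$4$ stationary point of \eqref{realodefull}. Then $\Re(S)=\langle E,\Re(S)\rangle E$, so $\Re(S)$ is a scalar multiple of $E$. Because $E\in \mc M_4$, the one-dimensional subspace spanned by $E$ is contained in the tangent space $\mc T_E \mc M_4$ (any smooth curve $t\mapsto (1+t)E$ stays in $\mc M_4$ and has tangent $E$ at $t=0$). Consequently $\Re(S)\in\mc T_E\mc M_4$, and since $\widetilde P_E$ is the orthogonal projector onto $\mc T_E\mc M_4$, we get $\widetilde P_E(\Re(S))=\Re(S)$. Plugging this into the right-hand side of \eqref{realeq:r2ode} gives exactly $-\Re(S)+\langle E,\Re(S)\rangle E=0$, so $E$ is a stationary point of the projected system as well.

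Putting the two directions together establishes the equivalence. The main potential subtlety is ensuring that the hypotheses of Theorem \ref{th:rstat} are implicitly available in the backward direction; but since we are starting from a stationary point $E$ of the full system with $E\propto \Re(S)$, the argument above bypasses the need to reinvoke that theorem and uses only the elementary tangent-space observation. Thus the only genuine content is Theorem \ref{th:rstat}, while the corollary itself is essentially a reformulation that exploits the key identity $\widetilde P_E(\Re(S))=\Re(S)$ at stationary points.
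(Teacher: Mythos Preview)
Your argument is correct and is precisely the natural way to unpack the paper's ``immediate consequence of Theorem~\ref{th:rstat}''; the paper gives no further detail. One small remark: the fact that $E\in\mc T_E\mc M_4$ follows directly from the projection formula \eqref{projeq}, since $(I-UU^\tp)UTV^\tp(I-VV^\tp)=0$ gives $\widetilde P_E(E)=E$ (this is how the analogous fact is used in the proof of Theorem~\ref{thm:3}), so your curve argument, while correct, is not needed.
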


\section{Computing the distance to defectivity}
\label{sec:7}

For a given $\eps$ we define, 
\begin{eqnarray}
r(\eps) & = & \min_{E : \| E \|_F = 1} y ^Hx, \quad
\mbox{where} \quad x \ \mbox{and} \ y \quad \mbox{right and left eigenvectors of}
\ A + \eps E 
\nonumber
\\
& & \mbox{with} \ y ^Hx \ge 0\;.
\label{eq:reps}
\end{eqnarray}
Given a matrix $A$ with all distinct eigenvalues and $\delta \ge 0$ we look for
$$
\eps^{\delta,*} = \arg\min\limits_{\eps > 0} \{ \eps: \ r(\eps) \le \delta \}\;. 
$$
Starting from $\eps > 0$ such that $r(\eps) > \delta$, which can be computed by integrating the ODEs  in previous sections, we want to compute a root $\eps^{\delta,*}$ of the equation
$
r(\eps) = \delta.
$
We expect generically that $r(\eps)$ is not smooth at zero, when two eigenvalues coalesce to form a Jordan block. Nevertheless, the eigenvalue $\lambda(\eps)$ and the eigenvectors $x(\eps)$ and $y(\eps)$ are smooth if $\eps<\eps^{0,*}$ so that $r(\eps)$ is smooth as well. 

More precisely, by Theorem \ref{thm:2} and Corollary \ref{cor:2} we know that at least for $\eps$ small enough there exist a unique branch of minimizers $E(\eps)$, which are non degenerate solutions to the stationary equation $P_E(S) = \Re\lan E,S\ran E$ and $\widetilde P_E\left( \Re(S) \right) = \lan E, \Re(S) \ran E$, respectively. These equations have the form $F(E,\eps)  = 0$, with $F$ a (complex or real) analytic function of $E$ and $\eps$, hence the solution $E(\eps)$, as well as the corresponding simple eigenvalue $\lambda(\eps)$ and eigenvectors $x(\eps)$ and $y(\eps)$, are (complex or real) analytic functions of $\eps$. In order to derive an equation for $\eps$ to approximate $\eps^{\delta,*}$, the $\delta$-distance to defectivity, we need to compute the derivative of $r(\eps)$ with respect to $\eps$.

\begin{theorem}
\label{th:der}
Let $r(\eps)$, $\eps \in [\underline{\eps},\overline{\eps}]$, be a branch of minima such that $r(\eps) > 0$ for all $\eps$, so that $\lambda(\eps)$ is a simple eigenvalue of
$
A + \eps E (\eps),
$
where $E(\eps)$ is a smooth minimizer of {\rm (\ref{eq:reps})} either with $E \in \bb C^{n\times n}$ or $E \in \bb R^{n\times n}$, that is $\lambda(\eps) \in \Lambda \left( A + \eps E(\eps) \right)$ with $\| E(\eps) \|_F = 1$ for all $\eps$.
For all $\eps$, let $x(\eps)$ and $y(\eps)$ be smooth vector valued functions determining right and left eigenvectors of $A + \eps E (\eps)$ of unit norm and such that $y(\eps) ^H x(\eps) > 0$, and $G(\eps)$ the G-inverse of $A + \eps E(\eps) - \lambda(\eps) I$. 
For the function $r(\eps)$ we have,
\begin{eqnarray}
\frac{\mathrm{d} r(\eps)}{\mathrm{d} \eps} & = &  r(\eps) \Re \Bigl(x(\eps) ^HG(\eps) E(\eps) x(\eps) + y(\eps) ^H E(\eps) G(\eps) y(\eps) \Bigr) \le 0\;.
\label{eq:der}
\end{eqnarray}
\end{theorem}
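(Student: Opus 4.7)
The starting point is the Mathias--Stewart formula \eqref{ms} applied to the smoothly parameterized matrix $M(\eps) = A + \eps E(\eps)$, whose $\eps$-derivative is $\dot M = E(\eps) + \eps\,\dot E(\eps)$. Writing $r(\eps) = y(\eps)^H x(\eps)$ and computing $r'(\eps) = \dot y^H x + y^H \dot x$, the contributions containing $y^H G$ or $G x$ drop out by definition of the group inverse, leaving
\[
r'(\eps) = r(\eps)\,\bigl[y^H \dot M\, G\, y + x^H G\, \dot M\, x\bigr].
\]
Since $r$ is real, one takes real parts and splits $\dot M = E + \eps\,\dot E$ to obtain
\[
r'(\eps) = r(\eps)\,\Re\bigl[y^H E\, G\, y + x^H G\, E\, x\bigr] + \eps\, r(\eps)\,\Re\bigl[y^H \dot E\, G\, y + x^H G\, \dot E\, x\bigr].
\]
The first summand matches \eqref{eq:der}; the core of the proof is to show that the second summand vanishes.

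The vanishing rests on two observations. First, by the same manipulation used in the proof of Lemma \ref{lem:2.1},
\[
\Re\bigl[y^H \dot E\, G\, y + x^H G\, \dot E\, x\bigr] \;=\; \Re\langle \dot E, S\rangle,
\]
so it suffices to show that $\Re\langle \dot E(\eps), S(\eps)\rangle = 0$. Second, since $E(\eps)$ is a smooth branch of stationary points of \eqref{odefull} in the complex case (respectively of \eqref{realodefull} in the real case), Theorem \ref{th:eprops} (respectively Theorem \ref{th:rstat}) guarantees that $S$ (respectively $\Re(S)$) is equal to $c(\eps)\, E(\eps)$ for some real scalar $c(\eps)$ at every point of the branch. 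Differentiating the constraint $\|E(\eps)\|_F^2 \equiv 1$ with respect to $\eps$ yields $\Re\langle \dot E, E\rangle = 0$, hence $\Re\langle \dot E, S\rangle = c(\eps)\,\Re\langle \dot E, E\rangle = 0$; the real-case argument is identical with $\Re(S)$ in place of $S$. This establishes identity \eqref{eq:der}.

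For the inequality $r'(\eps)\le 0$, one uses the monotonicity of the pseudospectrum built into definition \eqref{eq:Lameps}: $\Lameps^{\K}(A) \subset \Lambda_{\eps'}^{\K}(A)$ whenever $\eps \le \eps'$, so every admissible triple $(E, x, y)$ realizing some value of $y^H x$ at level $\eps$ is automatically admissible at level $\eps'$, and the minimum in \eqref{eq:reps} can only decrease; thus $r$ is non-increasing on $[\underline\eps, \overline\eps]$ and $r'(\eps) \le 0$. The main obstacle is really the vanishing of the $\dot E$-contribution, which is essentially an envelope-theorem statement dressed in eigenvector language: because $E(\eps)$ is a constrained minimizer of $|y^H x|$ on the unit sphere, its first-order variation along the tangential direction $\dot E$ must vanish, and Lemma \ref{lem:2.1} identifies precisely this variation with $\Re\langle \dot E, S\rangle$, so the stationarity $S \parallel E$ combined with the normalization $\|E\|_F = 1$ closes the argument.
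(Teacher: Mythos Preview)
Your proof follows the same route as the paper's: differentiate $r(\eps)=y^Hx$ via the Meyer--Stewart formulas \eqref{ms}, split $\dot M = E + \eps \dot E$, and kill the $\dot E$-term using the stationarity relation $E(\eps)\parallel S(\eps)$ (resp.\ $\Re(S)$) together with $\Re\langle \dot E,E\rangle=0$ from $\|E(\eps)\|_F\equiv 1$. This part is correct; the paper invokes Theorem~\ref{thm:1} (and its real analogue) rather than Theorems~\ref{th:eprops}/\ref{th:rstat}, but either citation yields the needed proportionality.

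There is, however, a small gap in your monotonicity argument for $r'(\eps)\le 0$. You appeal to the inclusion $\Lameps^{\K}(A)\subset\Lambda_{\eps'}^{\K}(A)$, which is a statement about perturbations with $\|E\|_F\le\eps$, whereas $r(\eps)$ in \eqref{eq:reps} is defined as a minimum over the \emph{sphere} $\|E\|_F=1$. A triple realizing a value at level $\eps$ corresponds, at level $\eps'>\eps$, to the perturbation $(\eps/\eps')E$ of sub-unit norm, which is not directly admissible in \eqref{eq:reps}. The paper closes this gap by showing that the minimum in \eqref{eq:reps} is unchanged if taken over the closed ball: given $E$ with $\|E\|_F<1$, set $E_\alpha = E + \alpha(I - xy^H/y^Hx)$, which leaves the eigenpair $(x,y)$ of $A+\eps E$ intact while $\alpha$ can be chosen so that $\|E_\alpha\|_F=1$. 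With this observation your pseudospectral inclusion argument goes through.
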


\begin{proof}
We indicate by $'$ differentiation with respect to $\eps$ and get 
\begin{eqnarray}
r'(\eps) & = & y'(\eps) ^H x(\eps) + y(\eps) ^H x'(\eps)\;.
\label{eq:dr}
\end{eqnarray}
Making use of \eqref{ms} and recalling that $G(\eps)x(\eps)\equiv 0$ and $y(\eps) ^HG(\eps)\equiv 0$ we obtain 
\begin{eqnarray}
r'(\eps)  & = & r(\eps) \Re\Bigl( x(\eps) ^H G(\eps) E(\eps) x(\eps) + y(\eps) ^H E(\eps) G(\eps) y(\eps) \Bigr) \nonumber
\\ && + \,  \eps\,r(\eps) \Re\Bigl(  x(\eps) ^H G(\eps) E'(\eps) x(\eps) + y(\eps) ^H E'(\eps) G(\eps) y(\eps) \Bigr)\;.
\label{eq:zc}
\end{eqnarray}
In order to prove the theorem we have to show that the second addendum in \eqref{eq:zc} vanishes. By Theorem \ref{thm:1} and the analogous for the real case we have that the extremizer (stationary point of the corresponding ODE) is 
\be
E(\eps) = c\,S(\eps), \qquad \mbox{where} \ 
S(\eps) = y(\eps) y(\eps) ^H G(\eps) ^H + G(\eps) ^H x(\eps) x(\eps) ^H,
\label{eq:Eeps}
\ee
for a suitable constant $c$. Furthermore, by norm conservation of $E(\eps)$ as $\eps$ varies, 
\be
\Re \big\langle E(\eps), E'(\eps) \big\rangle = 0\;.
\label{eq:EEp}
\ee

Equations (\ref{eq:Eeps}) and (\ref{eq:EEp}) imply (see (\ref{eq:zc}))
\begin{eqnarray*}
r'(\eps) = r(\eps) \Re \Bigl( \langle S(\eps), E(\eps) \rangle + \eps \langle S(\eps), E'(\eps) \rangle \Bigr)
= r(\eps) \Re \langle S(\eps), E(\eps) \rangle\;,
\end{eqnarray*}
that is (\ref{eq:der}). The non-positivity is due to monotonicity of $r(\eps)$ with respect to $\eps$.
This property follows immediately by noticing that the minimum in \eqref{eq:reps} can be equivalently computed on the closed ball $\{E\colon \| E \|_F \le 1\}$. Indeed, given a matrix $E$ of norm $\|E\|_F<1$, the matrix $E_\alpha = E + \alpha (I - xy ^H/y ^Hx)$ is such that $A+\eps E_\alpha$ has the same right and left eigenvectors $x$ and $y$, and the parameter $\alpha$ can be chosen to have $\|E_\alpha\|_F=1$.
\end{proof}

\begin{remark}\rm
\label{ES}
It is worthwhile to notice that the monotonicity property of $r(\eps)$ implies that the constant $c$ in \eqref{eq:Eeps} is negative, i.e., $c=-\|S(\eps)\|_F^{-1}$. Indeed, consider the Cauchy problem,
\[
\begin{cases} \dot {\mc E}(t) = - \mc S(t)\;, \\ \mc E(0) = E(\eps)\;, \end{cases}
\]
where $\mc S(t) = y(t)y(t)^HG(t)^H+G(t)^Hx(t)x(t)^H$ is the maximal descent direction associated to $A+\eps\mc E(t)$, so that $y(t)^Hx(t) < r(\eps)$ for $t$ positive and small. On the other hand, as
$E(\eps) = c\, S(\eps)$, 
\[
\frac{\mathrm{d}}{\mathrm{d}t} \|\mc E(t)\|_F^2 \bigg|_{t=0} = 2c\|S(\eps)\|_F^2\;.
\]
Hence, $c>0$ would imply both $\|\mc E(t)\|_F > \|E(\eps)\|_F$ and $y(t)^Hx(t) < r(\eps)$ for $t$ positive and small, in contradiction with the non-increasing property of $r(\eps)$.

In particular, \eqref{eq:der} can be written in the more concise form,
\be
\label{r's}
r'(\eps) = - r(\eps) \|S(\eps)\|_F\;.
\ee
\end{remark}

We may use Theorem \ref{th:der} to devise a Newton-bisection iteration to solve the equation $r(\eps)=\delta$.
However, since $r(\eps)$ is singular at $\eps=\eps^{0,*}$, this is not recommended for small $\delta$ (as our
experiments have put in evidence). 

We now analyze the behavior of $r(\eps)$ as $\eps$ approaches $\eps^{0,*}$. For the sake of simplicity we only consider the complex case, but similar results can be deduced in the real structured case. To state and prove the main result some extra remarks are needed. 

A first remark is that, for the generic case under consideration in which only two eigenvalues coalesce to form a Jordan block, the matrix $A+\eps^{0,*} E(\eps^{0,*})$ is non-derogatory and hence it is a continuity point for its invariant subspaces \cite{CH80}. In particular,
\be
\label{xy*}
x_*=x(\eps^{0.*}) = \lim_{\eps\nearrow\eps^{0,*}} x(\eps)\;, \qquad y_*=y(\eps^{0.*}) = \lim_{\eps\nearrow\eps^{0,*}} y(\eps)\;,
\ee
where, obviously, $y_*^Hx_* = r(\eps^{0,*})=0$. For the same reason, letting now,
\be
\label{M}
M(\eps) = A+\eps E(\eps)\;,
\ee
the rank of the matrix $M(\eps)-\lambda(\eps)I$ remains equal to $n-1$ also at $\eps=\eps^{0,*}$. Therefore, denoting by $Q^\dag$ the Moore-Penrose pseudoinverse of the matrix $Q$ and setting
\be
\label{BC}
B(\eps)=(M(\eps)-\lambda(\eps)I)^\dag\;, \qquad C(\eps) = y(\eps)^H B(\eps)x(\eps)\;,
\ee
the following limits exist and are finite,
\be
\label{BCl}
B_*=B(\eps^{0.*}) = \lim_{\eps\nearrow\eps^{0,*}} B(\eps)\;, \qquad C_*=C(\eps^{0.*}) = \lim_{\eps\nearrow\eps^{0,*}} C(\eps)\;.
\ee

\begin{proposition}
\label{prop:boh}
In the same hypothesis of Theorem \ref{th:der}, assume that the branch of minima extends to the whole interval $[\underline\eps,\eps^{0,*})$. Then,
\be
\label{rr'}
\lim_{\eps\nearrow\eps^{0,*}}r(\eps)r'(\eps) = -2 |C_*| \;.
\ee
\end{proposition}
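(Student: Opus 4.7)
The starting point is the concise form \eqref{r's}, which reduces the claim to showing
\[
r(\eps)^2 \|S(\eps)\|_F \longrightarrow 2|C_*| \quad \text{as } \eps\nearrow\eps^{0,*}.
\]
Because $S=y(Gy)^H+(G^Hx)x^H$ has outer-product structure, a direct expansion yields
\[
\|S\|_F^2 = \|Gy\|^2 + \|G^H x\|^2 + 2|x^H G y|^2,
\]
the mixed term being real and equal to $|x^H Gy|^2$ by the cyclic trace identity. Thus I need to control the three quantities $\|Gy\|$, $\|G^H x\|$ and $x^H G y$, all of which blow up because the group inverse $G(\eps)$ ceases to exist in the limit, whereas the pseudoinverse $B(\eps)$ stays bounded.

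The heart of the argument is to express $Gy$ and $G^H x$ through $B$. Writing $N(\eps)=M(\eps)-\lambda(\eps)I$, the group-inverse identity $NG=I-P$ with spectral projector $P=xy^H/r$ gives $N(Gy)=y-x/r$. Left-multiplying by $B$ and exploiting the Moore--Penrose relations $BN=I-xx^H$ (orthogonal projection onto $x^\perp=\mathrm{range}(N^H)$) together with $By=0$ (since $y\in\ker N^H=\ker B$), I obtain
\[
Gy = (x^H Gy)\, x - Bx/r.
\]
Applying $y^H$ on the left and using $y^H G = 0$ together with $y^H x = r$ then forces $x^H Gy = y^H Bx/r^2 = C(\eps)/r^2$. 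A symmetric calculation based on $B^H x=0$ yields $G^H x = (y^H G^H x)\, y - B^H y/r$ and $y^H G^H x = \overline{C(\eps)}/r^2$.

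Substituting these representations, the orthogonality conditions $x^H B=0$ and $By=0$ annihilate the cross terms in the squared-norm expansions, producing the clean formulas
\[
\|Gy\|^2 = \tfrac{|C|^2}{r^4} + \tfrac{\|Bx\|^2}{r^2}, \quad \|G^H x\|^2 = \tfrac{|C|^2}{r^4} + \tfrac{\|B^H y\|^2}{r^2}, \quad |x^H Gy|^2 = \tfrac{|C|^2}{r^4}.
\]
Summing gives $\|S\|_F^2 = 4|C|^2/r^4 + O(1/r^2)$, whence $r^2\|S\|_F = 2|C|\bigl(1+O(r^2)\bigr)^{1/2}$; passing to the limit via \eqref{xy*} and \eqref{BCl} yields $r(\eps)r'(\eps)=-r(\eps)^2\|S(\eps)\|_F\longrightarrow-2|C_*|$, which is \eqref{rr'}. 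The main obstacle is precisely the representation step linking $G$ to $B$: one must combine the four Moore--Penrose identities with the two null-space properties of the group inverse in just the right order so that \emph{the same} scalar $C(\eps)$ appears with multiplicity three in $\|S\|_F^2$, producing the factor $4=1+1+2$ responsible for the coefficient $2$ (rather than $1$) in the limit.
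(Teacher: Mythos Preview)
Your proof is correct and follows essentially the same route as the paper: both reduce to showing $r(\eps)^2\|S(\eps)\|_F\to 2|C_*|$ via the relation \eqref{r's}, and both exploit the link between the group inverse $G$ and the Moore--Penrose pseudoinverse $B$ together with the identities $x^HB=0$, $By=0$. The only cosmetic difference is that the paper quotes the projector formula $G=(I-xy^H/r)\,B\,(I-xy^H/r)$ from \cite{GO11} to write out $S^H$ explicitly (equation \eqref{SH}) before taking the norm, whereas you derive the expressions for $Gy$ and $G^Hx$ directly from $NG=I-xy^H/r$ and then compute $\|S\|_F^2$ through its outer-product decomposition; the resulting leading term $4|C|^2/r^4$ is identical.
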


\begin{proof}
We recall that the group inverse $G$ of a matrix $Q$ which has nullity one admits the following representation \cite{GO11},
\[
G = \Big(I-\frac{xy^H}{y^Hx}\Big) \, Q^\dag\, \Big(I-\frac{xy^H}{y^Hx}\Big)\;,
\]
where  $x,y$ are right and left null vectors of $Q$, respectively. In our context this gives,
\be
\label{Ge}
\begin{split}
G(\eps) & = \left(I-\frac{x(\eps)y(\eps)^H}{r(\eps)}\right) \, B(\eps) \, \left(I-\frac{x(\eps)y(\eps)^H}{r(\eps)}\right) \\ & = \frac{C(\eps)x(\eps)y(\eps)^H}{r(\eps)^2}  - \frac{B(\eps)x(\eps)y(\eps)^H+x(\eps)y(\eps)^HB(\eps)}{r(\eps)} +B(\eps)\;,
\end{split}
\ee
where $B(\eps)$ and $C(\eps)$ are defined in \eqref{BC}. By \eqref{Ge} and using the following properties of the Moore-Penrose pseudoinverse, 
\be
\label{Be}
x(\eps)^HB(\eps) = 0\;,\qquad B(\eps)y(\eps)= 0\;,
\ee
we have
\be
\label{SH}
S(\eps)^H = \frac{2C(\eps)x(\eps)y(\eps)^H}{r(\eps)^2} - \frac{B(\eps)x(\eps)y(\eps)^H+x(\eps)y(\eps)^HB(\eps)}{r(\eps)}\;.
\ee
By \eqref{SH} and \eqref{BCl} we get,
\be
\label{stS}
\lim_{\eps\nearrow\eps^{0,*}} r(\eps)^2\|S(\eps)\|_F = 2|C_*|\;.
\ee
Equation \eqref{rr'} now follows by \eqref{r's} and \eqref{stS}.
\end{proof}

The above proposition shows that the function $r(\eps)$ approaches zero like $\sqrt{\eps^{0,*}-\eps}$ as $\eps\nearrow\eps^{0,*}$. Indeed, under some further hypothesis (and however generically), such behavior holds true also for the higher order term in the Puiseaux expansion of $r(\eps)$ at $\eps^{0,*}$. This is the content of the next proposition, whose proof is in Appendix \ref{sec:A}.

\begin{proposition}
\label{prop:boh1}
In the same hypothesis of Theorem \ref{th:der}, assume that the branch of minima extends to the whole interval $[\underline\eps,\eps^{0,*})$. Then, if $\eps^{0,*}\|B_*\|_F$ is sufficiently small,
\be
\label{r3r''}
\lim_{\eps\nearrow\eps^{0,*}}r(\eps)^3r''(\eps) = -2|C_*|(1+4|C_*|)\;. 
\ee
\end{proposition}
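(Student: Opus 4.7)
The plan is to differentiate once more the identity $r'(\eps)=-r(\eps)\,\|S(\eps)\|_F$ from Remark \ref{ES}, and then to reuse the closed form for $\|S\|_F^2$ that already underlies the proof of Proposition \ref{prop:boh}. Writing $\sigma(\eps):=\|S(\eps)\|_F$, one obtains $r''=-r'\sigma-r\sigma'=r\sigma^2-r\sigma'$ after substituting $r'=-r\sigma$, so that
\[
r(\eps)^3 r''(\eps) \;=\; r(\eps)^4\sigma(\eps)^2 \;-\; r(\eps)^4\sigma'(\eps),
\]
and the problem reduces to the two limits $L_1:=\lim_{\eps\nearrow\eps^{0,*}} r^4\sigma^2$ and $L_2:=\lim_{\eps\nearrow\eps^{0,*}} r^4\sigma'$.

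For $L_1$ I would exploit the representation \eqref{SH} of $S^H$ together with the Moore-Penrose orthogonality relations \eqref{Be}, $x^HB=0$ and $By=0$. Expanding $\tr(SS^H)$ and using $y^HBy=0$, $x^HB^Hx=0$, etc., all cross terms collapse and one is left with the closed form
\[
\|S(\eps)\|_F^2 \;=\; \frac{4|C(\eps)|^2}{r(\eps)^4} \;+\; \frac{\|B(\eps)x(\eps)\|_2^2 \;+\; \|y(\eps)^H B(\eps)\|_2^2}{r(\eps)^2}.
\]
Multiplying by $r^4$ and passing to the limit via \eqref{BCl} yields $L_1=4|C_*|^2$, which accounts for the $-8|C_*|^2$ contribution of the claimed answer. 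For $L_2$, set $D(\eps):=\|Bx\|_2^2+\|y^HB\|_2^2$ and $g(\eps):=4|C|^2+r^2 D$, so that $g=r^4\sigma^2$ and $\sigma=\sqrt g/r^2$; differentiating and eliminating $rr'=-\sqrt g$ by means of \eqref{r's} leads after a short computation to
\[
r^4\sigma'(\eps) \;=\; \frac{r^2\bigl(8\,\Re(\overline{C}\,C')\;+\;2rr'D\;+\;r^2 D'\bigr)}{2\sqrt{g(\eps)}} \;+\; 2g(\eps).
\]
The second summand passes to $8|C_*|^2$ immediately; the first summand is what has to carry, in the limit, the extra $2|C_*|$ appearing in the stated identity.

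The main obstacle I expect is exactly the control of that first summand, which requires sharp asymptotics for $C'(\eps)$ and $D'(\eps)$ as $\eps\nearrow\eps^{0,*}$. Although the rank of $M(\eps)-\lambda(\eps)I$ stays at $n-1$ throughout, so that the pseudoinverse $B(\eps)=(M(\eps)-\lambda(\eps)I)^\dagger$ is continuous at the coalescence, the eigenpair $(x(\eps),y(\eps),\lambda(\eps))$ is only Puiseux-smooth in $\sqrt{\eps^{0,*}-\eps}$, so that direct differentiation of $C=y^H B x$ and of $D$ produces contributions that are a priori of order $1/\sqrt{\eps^{0,*}-\eps}$. The hypothesis $\eps^{0,*}\|B_*\|_F \ll 1$ is precisely what is needed to run a perturbative expansion of the minimizer $E(\eps)$, of $B(\eps)$ and of the eigenpair about $\eps^{0,*}$: resolving the stationarity equation $F(E,\eps)=O$ used in the proof of Theorem \ref{thm:2} by a quantitative implicit function argument produces uniform bounds on $E'$, $x'$, $y'$, and in turn the quantitative asymptotics for $C'$ and $D'$ that collapse the first summand to the required value and yield the stated identity.
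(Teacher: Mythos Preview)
Your reduction is correct and, in fact, is the paper's own computation in different clothing: since $E=-S/\|S\|_F$, one has $r\,\Re\langle S',E\rangle=-r\sigma'$, so your identity $r^3r''=r^4\sigma^2-r^4\sigma'$ is exactly the paper's $r^3r''=(rr')^2+r^4\Re\langle S',E\rangle$. The closed form $\|S\|_F^2=4|C|^2/r^4+(\|Bx\|^2+\|B^Hy\|^2)/r^2$ is right (all cross terms die by \eqref{Be}), and your splitting of $r^4\sigma'$ into a first summand plus $2g\to 8|C_*|^2$ is fine. One small accounting slip: the first summand must tend to $2|C_*|+4|C_*|^2$, not just $2|C_*|$, since $L_1-2g\to -4|C_*|^2$ and the target is $-2|C_*|-8|C_*|^2$.

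The real gap is in your last paragraph. The derivatives $E'$, $x'$, $y'$ are \emph{not} uniformly bounded near $\eps^{0,*}$: the eigenvector derivatives are driven by $G(\eps)$, which by \eqref{Ge} blows up like $r^{-2}$, and $E'=-\|S\|_F^{-1}\bigl(S'-\Re\langle E,S'\rangle E\bigr)$ inherits the divergence of $S'$. The implicit function argument of Theorem~\ref{thm:2} lives in the regime $\eps\to 0$, not $\eps\to\eps^{0,*}$, and cannot furnish such bounds. What actually makes the computation close --- and where the smallness hypothesis truly enters --- is the \emph{implicit linear} dependence of $S'$ on itself: because $M'=E+\eps E'$ and $E'$ is a linear function of $S'$, differentiating \eqref{SH} gives an equation of the form $S'=\mathcal{A}_\eps(S')+K_\eps$, and after multiplying by $r^4$ and letting $\eps\nearrow\eps^{0,*}$ this becomes a genuine fixed-point equation
\[
N^H \;=\; 8|C_*|C_*\,x_*y_*^H \;+\; \beta\bigl(B_*Nx_*y_*^H + x_*y_*^HNB_*\bigr),\qquad |\beta|=\eps^{0,*},
\]
for $N=\lim r^4 S'$; the smallness of $\eps^{0,*}\|B_*\|_F$ is exactly what makes this contraction solvable. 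Moreover, the scalar $\zeta=(Nx_*)^Hy_*$ that feeds your ``first summand'' cannot be read off at leading order (the limiting equation for it degenerates to $\Im(\bar C_*\zeta)=0$); one has to retain one subleading term to pin down $\zeta=8|C_*|C_*$. Your route through $C'$ and $D'$ would inevitably land on the same implicit structure, but your sketch does not yet contain this mechanism.
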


\subsection{The algorithm} 
\label{sec:7.1}

For $\eps$ close to $\eps^{0,*}$, $\eps < \eps^{0,*}$, we have generically, 
\begin{eqnarray}
&&
\left\{
\begin{array}{rcl}
r(\eps) & = & \gamma \sqrt{\eps^{0,*} - \eps} + \bigo\bigl( (\eps^{0,*} - \eps)^{3/2} \bigr) \;,
\\[2mm]
r'(\eps) & = & \displaystyle{-\frac{\gamma}{2 \sqrt{\eps^{0,*} - \eps}}} + 
\bigo\bigl( (\eps^{0,*} - \eps)^{1/2} \bigr)\;,
\end{array}
\right.
\label{eq:eps}
\end{eqnarray}
which corresponds to the coalescence of two eigenvalues. 
In order to set up an iterative process, given $\eps_k$, we use Theorem \ref{th:der} to compute $r'(\eps)$ and estimate 
$\gamma$ and $\eps^{0,*}$ by solving (\ref{eq:eps}) with respect to $\gamma$ and 
$\eps^{0,*}$. We denote the solution as $\gamma_k$ and $\eps^{0,*}_k$, i.e.,
\begin{eqnarray}
\gamma_k & = & \sqrt{2 r(\eps_k) |r'(\eps_k)|}\;, \qquad
\eps^{0,*}_k = \eps_k + \frac{r(\eps_k)}{2 |r'(\eps_k)|}\;,
\label{eq:stepk}
\end{eqnarray}
and then compute $\eps_{k+1} = \eps^{0,*}_k - {\delta^2}/{\gamma_k^2}$. An algorithm based on previous formul\ae \ is Algorithm \ref{algo}.
\begin{algorithm}
\DontPrintSemicolon
\KwData{$\delta$, ${\rm tol}$, 
starting eigenvalue $\lambda_0$ (candidate to coalesce), 
$\eps_{\ell}$ (lower bound, default $0$), $\eps_r$ (upper bound, see Section \ref{sec:8.0}) and $\eps_0 \in (\eps_\ell.\eps_r)$}
\KwResult{$\eps^{\delta,*}$}
\Begin{
\nl Set $k=0$ and $r(\eps_0)=2\delta$ (just to enter the while loop)\;		
\nl \While{$|r(\eps_k) - \delta| \ge {\rm tol}$}{

\nl Compute $r(\eps_k)$ by integrating (\ref{eq:r2ode}) (complex case) or 
    (\ref{realeq:r2ode}) (real case), with initial datum $E(\eps_{k-1})$ (if $k \ge 1$)\;		
\nl \eIf{$r(\eps_k) > {\rm tol}$}{
		\nl Set $\eps_\ell=\eps_k$\;
    \nl Set ${\rm Bisect} = {\rm False}$} {
		Set $\eps_r=\eps_k$\;
    \nl Set ${\rm Bisect} = {\rm True}$ }

\nl \eIf{${\rm Bisect} = {\rm True}$}{
    Set $\eps_{k+1} = \left( \eps_\ell+\eps_r \right)/2$\;
    }{
Compute $r'(\eps_k)$ by (\ref{eq:der})\;     
\nl	Compute $\gamma_k$ and $\eps^{0,*}_k$ by (\ref{eq:stepk})\;
\nl Set $\widehat\eps_{k+1} = \eps^{0,*}_k - \displaystyle{\frac{\delta^2}{\gamma_k^2}}$\;
\nl \eIf{$\widehat\eps_{k+1} \in \left( \eps_\ell, \eps_r \right)$}{Set $\eps_{k+1} = \widehat\eps_{k+1}$}
{Set $\eps_{k+1} = \left( \eps_\ell+\eps_r \right)/2$}
    }
\nl Set $k=k+1$\;
}
\nl Print $\eps^{\delta,*} \approx \eps_k$\;
\nl Halt
}
\caption{Basic algorithm for computing the $\delta$-distance to defectivity. \label{algo}}
\end{algorithm}

The correction of $\eps_{k+1}$ in the if-statement at line {\bf 8} is due to the fact that 
if $\eps_{k+1}$ is larger than $\eps^{0,*}$ the value is corrected by a bisection step. 

The test at line {\bf 2} - when negative - means that for $\eps=\eps_k$ there are
coalescing eigenvalues (up to a tolerance {\rm tol}) in the $\eps$-pseudospectrum. 

The algorithm shows quadratic convergence, that is (for small $\delta$) and close to $\eps^{\delta,*}$,
\[
| \eps_{k+1} - \eps^{\delta,*} | = \bigo\bigl( | \eps_{k+1} - \eps^{\delta,*} |^2 \bigr)\;,
\] 
as we will show in the forthcoming section of numerical illustrations.
Clearly if $\delta$ is not too small a classical Newton iteration might also be alternatively used. 

\begin{remark}\rm
If we set $\delta$ sufficiently small, by exploiting the Puiseux expansion (\ref{eq:eps}),
we find at the same time an accurate solution $\eps^{\delta,*}$ of
equation $r(\eps)=\delta$ and of $\eps^{0,*}$, that is an upper bound for the distance to defectivity.
\end{remark}

\subsection{Illustrative examples}

We consider first few illustrative examples of small dimension, of both complex and real matrices. 
For the second real example we compute both the complex and the real distance to defectivity.

Then we report the results obtained on some test problems also of large size.

\subsection*{Example 1}

Consider the complex matrix 
$$A=\left( \begin{array}{rrrrr}
   0        &  1 +  \iu &   2 +  \iu &  1 + 2\iu &        1 \\						
  -1        & -1 -  \iu &   1 -  \iu &    -  \iu &        0 \\          
   1 -  \iu & -1 - 2\iu &   1 + 2\iu &    - 2\iu &        0 \\          
   1 - 2\iu &  1 -  \iu &  -1 + 2\iu & -1 -  \iu &        0 \\          
   1        & -1 -  \iu &       2\iu & -1 -  \iu &   - 2\iu
\end{array} \right)\,.
$$
\begin{figure}[ht]
\centerline{
\includegraphics[width=7cm]{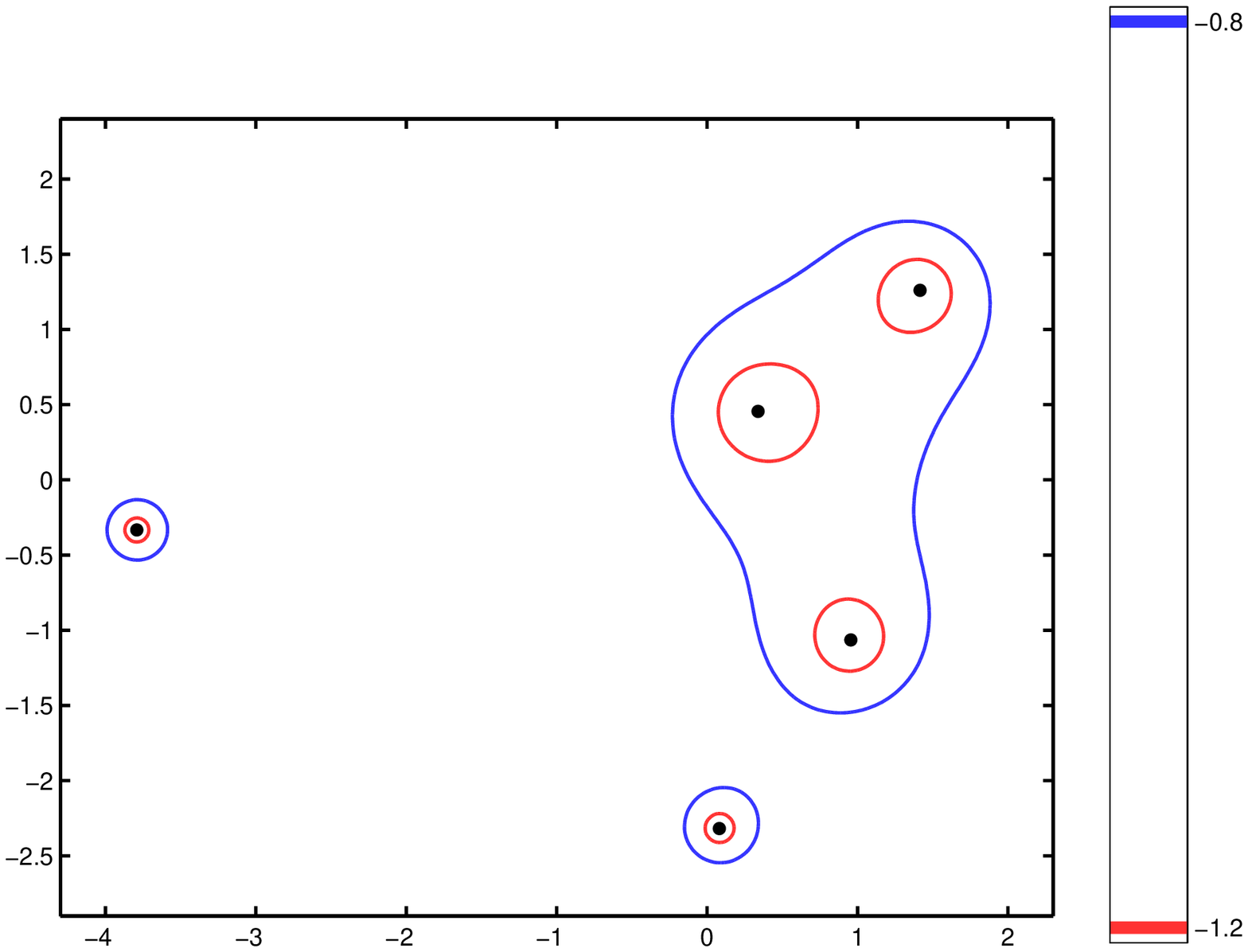} \hskip 0.25cm \includegraphics[width=4.3cm]{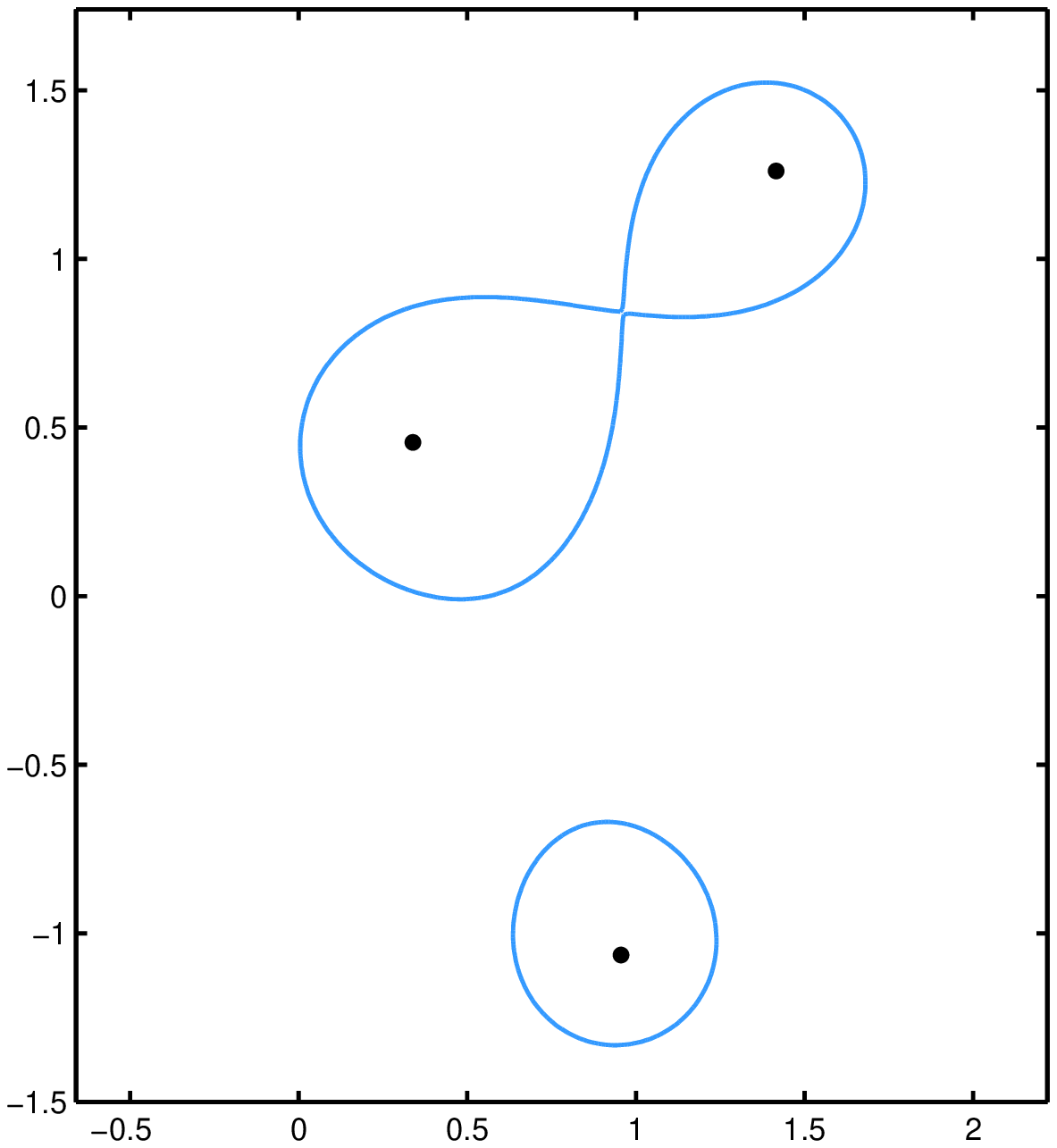}}
\caption{Left picture: the complex $\eps$-pseudospectrum for $\eps_1=10^{-1.2}$ and $\eps_2=10^{-0.8}$ 
indicates that the distance to defectivity $\eps^{0,*} \in (\eps_1,\eps_2)$. Right picture: the 
$\eps$-pseudospectrum for $\eps = \eps^{0,*}$; the coalescent eigenvalues appear in correspondence
of the intersections of the two ovals at $\lambda \approx 0.961516149290911 + 0.840702239813292 \iu$. \label{Ex_1}}
\end{figure}

By using the procedure described in \cite{ABBO11} we identify as initial eigenvalues,
candidate to coalesce, 
\begin{eqnarray*}
\lambda_1 = 1.416177710 + 1.260523165\,\iu\;, 
\quad
\lambda_2 = 0.338991381 + 0.455810180\,\iu\;.
\end{eqnarray*}
\begin{table}[ht]
\begin{center}
\begin{tabular}{l|l|l|}\hline
 $k$ & $\eps_k$ & $r(\eps_k)$  \\
 \hline
\rule{0pt}{9pt}
\!\!\!\! $0$ & $0.063095734448019$ & $0.105255211077608$   \\
 $1$         & $0.086013666854219$ & $< {\rm tol}$         \\
 $2$         & $0.081430080372979$ & $0.031280704783587$   \\
 $3$         & $0.082922950147539$ & $< {\rm tol}$         \\
 $4$         & $0.082624376192627$ & $0.013324941846019$   \\
 $5$         & $0.082879786721242$ & $< {\rm tol}$         \\
 $6$         & $0.082828704615519$ & $0.005919163134962$   \\
 $7$         & $0.082876946962636$ & $0.000910106101987$   \\
 $8$         & $0.082876706789675$ & $0.000999989689847$   \\
 $9$         & $0.082876706760826$ & $0.000999999999761$   \\
 \hline
\end{tabular}
\vspace{2mm}
\caption{Computed values of $\eps$ and $r(\eps)$ for Example 1. \label{tab:ex1}}
\end{center}
\end{table}

Applying the algorithm presented in this paper starting by $\lambda_1$, with $\eps_0=10^{-1.2}$,
$\delta=10^{-3}$, $\theta=0.8$  and ${\rm tol}=10^{-6}$, we compute
\[
\eps^{\delta,*} = 0.082876706760826\;.
\]

Table \ref{tab:ex1} illustrates the behaviour of Algorithm \ref{algo}.

\subsection*{Example 2}

Consider the real matrix $A$,
\begin{equation}
A=\left( \begin{array}{rrrrrr}
     1  &   1  &   1  &   1  &   0  &   0  \\
    -1  &   1  &   1  &   1  &   1  &   0  \\
     0  &  -1  &   1  &   1  &   1  &   1  \\
     0  &   0  &  -1  &   1  &   1  &   1  \\
     0  &   0  &   0  &  -1  &   1  &   1  \\
     0  &   0  &   0  &   0  &  -1  &   1
\end{array} \right)\,,
\label{eq:grcarmat}
\end{equation}
known as {\em Grcar}, of dimension $6$. A pair of initial eigenvalues candidate to coalesce is
\begin{eqnarray*}
\lambda_1 = 0.358489183 - 1.950114681\,\iu\;,
\quad
\lambda_2 = 1.139108055 - 1.230297560\,\iu\;.
\end{eqnarray*}
\begin{table}[ht]
\begin{center}
\begin{tabular}{l|l|l|}\hline
 $k$ & $\eps_k$ & $r(\eps_k)$  \\
 \hline
\rule{0pt}{9pt}
\!\!\!\! $0$ & $0.100000000000000$ & $0.589633247093566$   \\
 $1$         & $0.350780150979330$ & $< {\rm tol}$         \\
 $2$         & $0.300624120783464$ & $0.014186043007898$   \\
 $3$         & $0.300716631787740$ & $0.000976909835249$   \\
 $4$         & $0.300716233100507$ & $0.001348256831758$   \\
 $5$         & $0.300716610710709$ & $0.000999998097688$   \\
 $6$         & $0.300716610677479$ & $0.001000034080827$   \\
 $7$         & $0.300716610708953$ & $0.001000000000005$   \\
 \hline
\end{tabular}
\vspace{2mm}
\caption{Computed values of $\eps$ and $r(\eps)$ for Example 2 (real distance).\label{tab:ex2}
}
\end{center}
\end{table}

Applying the algorithms presented in this paper starting by $\lambda_1$, with $\eps_0=10^{-1}$,
$\delta=10^{-3}$, $\theta=0.8$  and ${\rm tol}=10^{-6}$, we compute the following values
\begin{eqnarray*}
\eps^{\delta,*} & = & 0.215185436319885 \qquad \mbox{(complex distance)}\;,
\\
\eps^{\delta,*} & = & 0.300716610708953 \qquad \mbox{(real distance)}\;. 
\end{eqnarray*}

Table \ref{tab:ex2} illustrates the behaviour of Algorithm \ref{algo} for computing the
real $\delta$-distance. 
The coalescence point is $\lambda = 0.756775621111013 - 1.594861012705232\,\iu$.

The estimated distance to defectivity is $\eps^{0,*} \approx 0.300725344809309$.

%
%
%
%

\section{Implementation issues}
\label{sec:8}

In this section we discuss some aspects relevant to the numerical computation of the
$\delta$-distance to defectivity.

We consider here the cases where the Jordan canonical form $A = V D V^{-1}$  is available ($D$ is the 
diagonal matrix of eigenvalues of $A$ and $V$ the matrix whose columns are the associated
eigenvectors). This case is interesting when the size of $A$ is not too large.
The situation where Jordan canonical form of $A$ is not available would be also interesting but it needs a 
guess for the candidate eigenvalues to coalesce and for this reason we do not discuss it here.

\subsection{Upper bound}
\label{sec:8.0}

In order to compute an upper-bound $\eps_r$ to $\eps^{\delta,*}$, we consider the canonical decomposition of $A$,
\[
A = \sum\limits_{i=1}^{n} \lambda_i \frac{1}{y_i^H x_i} x_i y_i^H\;,
\]
where $x_i$ and $y_i$ are the right and left eigenvectors of $A$ associated to $\lambda_i$, normalized as 
$\| x_i \| = \| y_i \| = 1$ and $y_i^H x_i > 0$ for all $i$.
In the case of complex perturbations ($\K=\C$) we have the following natural upper bound,
\begin{equation}
\eps_r^\C = \min\limits_{1 \le i \le n} \min\limits_{1 \le j \le n, j\neq i} \frac{|\lambda_i -\lambda_j|}{y_i^H x_i}\;.
\label{eq:boundC}
\end{equation}
For a real matrix, in the case of real admissible perturbations, we have the same bound (\ref{eq:boundC}) if we 
consider coalescence of two real eigenvalues either two complex conjugate eigenvalues. Otherwise, if two complex
eigenvalues (not conjugate to each other) coalesce, we have to consider a double coalescence (due to the fact that
also the coonjugate pair coalesces). This gives the following value,
\begin{equation}
\eps_r^\R = \min\limits_{1 \le i \le m} \min\limits_{1 \le j \le m, j \neq i} 2\frac{|\Re\left(\lambda_i -\lambda_j\right)|}{y_i^H x_i}\;,
\label{eq:boundR}
\end{equation}
where the set of the first $m\le n/2$ eigenvalues have nonzero imaginary part and do not contains any conjugate pair.

\subsection{Choice of the initial eigenvalue}
\label{sec:8.1}

We make use of the method proposed in \cite{ABBO11}, which we extend to the case of real
perturbations. 

Let $p_j$ be the eigenvalue condition number for $\lambda_j$ of $A$, we define the 
candidate coalescence point, say $z_0$, by the following combination of two eigenvalues
$$z_0=\frac{p_{j'}\lambda_{k'} + p_{k}\lambda_{j'}}{p_{j'}+ p_{k'}}\;,
$$
where the index pair $(j', k')$ minimizes $|\lambda_j-\lambda_k|/(p_j+p_k)$ over all distinct pairs of eigenvalues. The starting guess is derived from first order perturbation bounds for simple eigenvalues. If $A$ is perturbed by $\eps E$ with $\eps$ small, then for an eigenvalue 
$\lambda_j$ of $A$ there exists an eigenvalue $\tilde \lambda_j$ of $A+\eps E$ such that 
$|\tilde \lambda_j -\lambda_j| \leq p_j\eps + \bigo (\eps^2)$. Thus, for sufficiently small $\eps$, the component of $\Lameps^{\K}(A)$ containing $\l_j$ is approximately a disk of radius $p_j \eps$ centered at $\lambda_j$. Therefore, a point of coalescence of two components of $\Lameps^{\K}(A)$ containing eigenvalues, say, $\lambda_j$ and $\lambda_k$, is expected to be approximated by the point of coalescence of the disks $|z-\lambda_j|\leq p_j\eps$ and $|z-\lambda_k| \le p_k \eps$. 
This gives the guess $z_0$ which is the point of coalescence of the disks for the eigenvalues $\lambda_{j'}$ and $\lambda_{k'}$.
Note that when $\K=\C$, the rate $p_j$  of $\lambda_j$  is given by the traditional eigenvalue condition number $\kappa(\lambda_j)$, whereas when  $\K = \R$, the role of $p_j$ is played by the first-order measure in the Frobenius norm of the worst-case effect on $\lambda_j$ of real perturbations; see, e.g., \cite{BK04, KKT06}. 


\subsection{Eigenvalue computation}
\label{sec:8.2}

For problems of small dimension we need to compute the eigenvalues and the eigenvectors of the matrices 
$A + \eps E$ by using the Matlab routine \emph{eig}  and in particular, since the relevant eigenvalues and eigenvectors are often ill conditioned, once we have the right eigenvector, we do not compute the left one only by inverting the matrix, but we make a second call to \emph{eig} in order to compute the right eigenvectors of $(A + \eps E)^H$ (similarly to what is done in \cite{GO11}).


For problems of large dimension (and possibly sparse structure)  we use the routine \emph{eigs}, which is an interface for ARPACK \cite{LSY98}, a code for implementing the implicitly restarted Arnoldi method instead of \emph{eig}. This choice is based on the fact that  \emph{eigs} accepts as input a sparse matrix and function handles and therefore we do not need to compute the dense matrix $A+\eps E$ explicitly. 

\subsection{Computation of the group inverse}
\label{sec:8.3}

The main computational problem when computing the right-hand side of the differential equations 
(\ref{eq:r2ode}) and (\ref{realeq:r2ode}) is the the application of the group inverse $G$ to a vector. 
In order to compute it efficiently we make use of the following result from \cite{GO13}.
\begin{theorem} \label{th:Ginv}
Suppose that $B$ has index one with $x \in \ker(B)$
and $y \in \ker(B^H)$ of unit norm and such that $y^H x > 0$.  
Let $B^\#$ be the group inverse of $B$. Then
\begin{eqnarray}
B^\# & = & \Pi B\pin \Pi\;,
\label{eq:groupinvformula}
\\
B^\# & = & \Pi \left(  B + y x^H \right)^{-1} \Pi\;,
\label{eq:f2}
\end{eqnarray}
where $\Pi$ is the projection $\displaystyle{\left( I -  x z^H \right) }$
with $z = y/(y^H x)$.
\end{theorem}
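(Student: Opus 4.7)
My plan is to verify both formulas by showing that the claimed expression vanishes on $\ker(B) = \text{span}(x)$ and restricts to the inverse of $B|_{\cR(B)}$ on $\cR(B)$. Under the index-one assumption one has the direct sum decomposition $\bb C^n = \ker(B) \oplus \cR(B)$ with $\cR(B)$ being $B$-invariant and $B|_{\cR(B)}$ invertible, so the operator defined by these two properties is unique and, by a direct check on the two summands, automatically satisfies the three defining conditions $BG=GB$, $GBG=G$, $BGB=B$ of Definition~\ref{def:groupinv}.

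The first step is to analyze the operator $\Pi = I - xz^H$ with $z = y/(y^Hx)$. Since $z^Hx = 1$, one checks at once that $\Pi^2 = \Pi$ and $\Pi x = 0$, while $\Pi v = v$ if and only if $y^Hv = 0$, i.e.\ if and only if $v \in \cR(B) = \ker(B^H)^\perp$. Thus $\Pi$ is the oblique projector onto $\cR(B)$ along $\text{span}(x)$, and the crucial identity $B\Pi = B$ follows from $Bx = 0$.

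For formula \eqref{eq:groupinvformula}, set $G_1 = \Pi B\pin \Pi$. Clearly $G_1 x = 0$, and for $v \in \cR(B)$ one has $\Pi v = v$, hence $G_1 v = \Pi B\pin v \in \cR(\Pi) = \cR(B)$, while $BG_1 v = B\Pi B\pin v = B B\pin v = v$, using $B\Pi = B$ and the fact that $BB\pin$ is the orthogonal projector onto $\cR(B)$. Hence $G_1$ fits the desired characterization. For formula \eqref{eq:f2}, I would first check invertibility of $B + yx^H$: from $(B+yx^H)v = 0$, left-multiplication by $y^H$ yields $x^Hv = 0$ (using $y^HB = 0$ and $\|y\| = 1$), whence $Bv = 0$, i.e.\ $v = \alpha x$, and then $\alpha = x^Hv = 0$. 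Letting $H = (B+yx^H)^{-1}$, multiplying $BH + yx^HH = I$ on the left by $y^H$ gives the key identity $x^HH = y^H$. Setting $G_2 = \Pi H \Pi$, the same pattern yields $G_2 x = 0$ and, for $v \in \cR(B)$,
\[
BG_2 v = B\Pi Hv = BHv = v - y(x^HHv) = v - y(y^Hv) = v,
\]
so $G_2$ also meets the characterization of $B^\#$.

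The main obstacle is extracting the identity $x^HH = y^H$ needed for the second formula: without it, the spurious term $y(x^HHv)$ appearing in $BHv$ would not vanish on $\cR(B)$ and the verification $BG_2 v = v$ would fail. Once this identity is in hand, both formulas reduce uniformly to the single fact $B\Pi = B$ and to checking the action on the two summands of $\bb C^n = \ker(B)\oplus\cR(B)$.
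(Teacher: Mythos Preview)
Your proof is correct. The paper does not actually prove Theorem~\ref{th:Ginv}; it quotes the result from \cite{GO13} and uses it as a computational tool, so there is no proof in the paper to compare against.

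On its own merits your argument is clean and complete. The characterization you use---that under the index-one decomposition $\bb C^n=\ker(B)\oplus\cR(B)$ the group inverse is the unique operator annihilating $\ker(B)$ and inverting $B|_{\cR(B)}$---is standard and you verify it carefully. Two small points worth making explicit: (i) you only check that $BG_iv=v$ for $v\in\cR(B)$, which a priori gives only a one-sided inverse; but since $G_i$ maps the finite-dimensional $\cR(B)$ into itself and $B|_{\cR(B)}$ is invertible, a right inverse is automatically two-sided, so this is fine; (ii) the theorem's hypothesis ``index one with $x\in\ker(B)$, $y\in\ker(B^H)$'' is implicitly assuming the zero eigenvalue is simple (otherwise the rank-$(n-1)$ projector $\Pi$ could not kill all of $\ker(B)$), which matches Definition~\ref{def:groupinv} and your setup. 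The extraction of $x^HH=y^H$ via left-multiplication by $y^H$ is exactly the right move for the second formula.
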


In the case considered here we can apply Theorem \ref{th:Ginv} with
$B = A -\lambda I + \eps E$, so that - using (\ref{eq:f2}) - $G = \Pi \left( A -\lambda I + F \right)^{-1} \Pi$, where
$
F = \eps E + \mu x z^H = U_1 \Sigma_1 V_1^H 
$
is a rank-$\ell$ matrix,
with $\ell=3$ in the complex case and $\ell=5$ in the real case.
Thus $U_1$ and $V_1$ are $n \times \ell$ matrices and $\Sigma_1$ is a $\ell \times \ell$ 
diagonal matrix.
Since $\left( A - \lambda I \right)^{-1} = V \left(D - \lambda I\right)^{-1} V^{-1}$ and we
have $V$, $D$ and $V^{-1}$ (which we have computed in the beginning to detect a candidate eigenvalue of $A$ to coalesce), we can make use of the well-known Sherman-Morrison-Woodbury formula \cite{W50} 
to exploit the low-rank structure of $F$, i.e., $\left(A -\lambda I + F\right)^{-1} $ is given by
\[
\left(A -\lambda I \right)^{-1} - \left(A - \lambda I \right)^{-1} U_1 \Sigma_1 
\left(\Sigma_1 + \Sigma_1 V_1 \left(A - \lambda I \right)^{-1} U_1 \Sigma_1 \right)^{-1} 
\Sigma_1 V_1 \left(A - \lambda I \right)^{-1}.
\]
%
%
This allows for an efficient computation of the matrix vector product $G w$, $w$ being an arbitrary vector. 
When the matrix $A$ is sparse we can make use of a sparse linear solver. For example
we can use GMRES as the computation of matrix products of the form $A+F$ with $A$ sparse and $F$ of
low rank is inexpensive.
Additionally, making use of  the routine \emph{ilu}, we have the sparse incomplete LU factorization of $A$, i.e., $\widetilde{L}\widetilde{U}$, and  we can compute as before the vector product $\widetilde{G} w$, $w$ being an arbitrary vector and $\widetilde{G}$ the group inverse of $\widetilde{L}\widetilde{U} + F$,  as a preconditioner to be passed to  the \emph{gmres}. 

\subsection{Scaling of the right-hand side in the ODEs}
\label{sec:8.4}

When $\eps=\eps^{0,*}$ the group inverse of $A+\eps E(\eps) -\lambda(\eps)I$ is not defined, and in fact in the considered ODEs it becomes singular as time approaches infinity, i.e., close to coalescence. This means that when $\eps^{0,*}-\eps$ is very small the group inverse gets very large at the stationary points. To overcome this problem we may premultiply the right-hand side of the ODEs by the real scalar $r^2=(y^Hx)^2$. Indeed, this is equivalent to replace $G$ by $(y^Hx)^2G$, which has a finite limit as $\eps\nearrow\eps^{0,*}$, that is, by \eqref{BC}, \eqref{BCl}, and \eqref{Ge}, 
\[
\lim_{\eps\nearrow\eps^{0,*}} r(\eps)^2 G(\eps) = \lim_{\eps\nearrow\eps^{0,*}} C(\eps)x(\eps)y(\eps)^H = C_* x_*y_*^H\;,
\]
where $x_*,y_*$ are defined in \eqref{xy*}.
 
Indeed, recalling (\ref{eq:f2}) the constant $C_*$ defined in \eqref{BCl}, is equivalently given by
\[
C_* = y_*^H \left( B(\eps^{0,*})  + y_* x_*^H \right)^{-1} x_*\;,
\] 
where $B(\eps) = A+\eps E(\eps) -\lambda(\eps)I$.
We get this showing the existence of the limit of 
$\left( B(\eps)  + y(\eps) x(\eps)^H \right)^{-1}$ as $\eps\nearrow\eps^{0,*}$.
To this purpose, consider the $2 \times 2$ block
\[
\left( \begin{array}{cc}
a & 1 \\
-a^2 + \delta a & - a + \delta
\end{array}
\right)\;,
\] 
where $a$ and $\delta$ are complex numbers depending on $\eps$, such that $\lim\limits_{\eps\nearrow\eps^{0,*}} \delta = 0$ and $\lim\limits_{\eps\nearrow\eps^{0,*}} a = a_* \in \R$. This is a generic matrix with a simple zero eigenvalue for $\eps < \eps^{0,*}$ which becomes double and defective at $\eps = \eps^{0,*}$.
An explicit computation yields
\[
B(\eps^{0,*})  + y_* x_*^H = \left( \begin{array}{cc}
2 a_* & 1-a_*^2 \\
1-a_*^2  & - 2 a_*
\end{array}
\right)\,,
\]
which is clearly invertible.


\subsection{Euler discretization}
\label{sec:8.5}

The stepsize control in the numerical integration of the ODEs by Euler method is simply driven by the
monotonicity requirement $r(t_{n+1}) < r(t_n)$.
The stepsize $h$ can
be selected by the Algorithm \ref{algo2}.

\begin{algorithm}
\DontPrintSemicolon
\KwData{the eigenvalue $\lambda_n$ and the normalized eigenvectors $x_{n}, y_{n}$ of the matrix $A+\eps E_{n}$, 
(with $E_{n} \approx E(t_{n})$, $t_n=t_{n-1}+h_{n-1}$), \ $\sigma>1$ (stepsize ratio), $h$ (predicted stepsize)}
\KwResult{$h_{n+1}, \,E_{n+1}, \, \lambda_{n+1}$}
\Begin{
\nl Apply a step of Euler's method to (\ref{eq:r2ode}) (or (\ref{realeq:r2ode})) to obtain 
    $\widetilde E_{n+1} \approx E(t_n + h)$.\;
\nl Compute the eigenvalue $\widetilde\lambda(h)$ of $A+\eps \widetilde E_{n+1}$, closer to $\lambda_{n}$.\;
\nl Compute $\widetilde{x}, \widetilde{y}$ be the normalized eigenvectors associated to $\widetilde\lambda(h)$. \;					
\nl \If{$\widetilde{y}^H \widetilde{x} \ge y_n^H x_n$}{
    Reduce the stepsize $h=h/\sigma$,\; 
    Repeat from 1.}
\nl \If{$h \ge h_{n-1}$}{
    Apply a step of Euler's method to obtain $\widehat E_{n+1} \approx E(t_n + \sigma h)$.\;
    Compute $\widehat\lambda(t_n + \sigma h)$ and the associated normalized eigenvectors
    $\widehat{x}, \widehat{y}$\;
		   \If{$\widehat{y}^H \widehat{x}  \le \widetilde{y}^H \widetilde{x}$}{
		   Increase the stepsize $h=\sigma h$}
		}
\nl	Set $\lambda_{n+1}= \widetilde\lambda(t_n+h)$, $E_{n+1} = \widetilde E_{n+1}$\;
}
\caption{Stepsize selection. \label{algo2}}
\end{algorithm}

\section{Numerical illustrations: complex and real-structured distances}
\label{sec:9}

\subsection*{Example 3: {\tt West0067} matrix, dimension \texorpdfstring{$67$}{67}} 

This real matrix is part of the Chemwest collection, modeling of chemical engineering plants.

\paragraph*{Complex distance}

We set $\delta=10^{-3}$ and obtain the 
following values:
\[
\eps^{0,*} = 0.00551675100\;, \qquad \gamma = 2.01525876223\;, \qquad \eps^{\delta,*} = 0.00551650477\;.
\]
The computed eigenvalues, which are expected to coalesce for $\eps=\eps^{0,*}$ are
\[
\lambda_1 = -0.252310074 + 0.853692119\iu\;, \quad
\lambda_2 = -0.252142643 + 0.853721574\iu\;,
\]
having a distance $|\lambda_1 - \lambda_2| \approx 1.7\cdot 10^{-4}$.

These values agree with those computed by the code in \cite{ABBO11}, i.e.,
$\eps^{0,*} = 0.00551675$.

\paragraph*{Real distance}

We set $\delta=10^{-3}$ and obtain the results in Table \ref{tab:exwest67r}; as a by-product we compute the 
following values:
\[
\eps^{0,*} = 0.0078798150\;, \qquad \gamma = 0.2864276818\;, \qquad \eps^{\delta,*} = 0.0078676260\;.
\nonumber
\]
The $4$ computed eigenvalues, which are expected to coalesce pairwise for $\eps=\eps^{0,*}$ are
\begin{eqnarray}
&& 
\lambda_1 = -0.251808570 + 0.853267804\iu\;, \quad
\lambda_2 = -0.251792424 + 0.853170539\iu\;,
\nonumber
\end{eqnarray}
and their conjugates, having a distance $|\lambda_1 - \lambda_2| \approx 9.8\cdot 10^{-5}$.

\begin{table}[ht]
\begin{center}
\begin{tabular}{l|l|l|}\hline
 $k$ & $\eps_k$ & $r(\eps_k)$  \\
 \hline
\rule{0pt}{9pt}
\!\!\!\! $0$ & $0.010000000000000$ & $< {\rm tol}$   \\
 $1$         & $0.007500250000000$ & $0.032158858474852$   \\
 $2$         & $0.007871693262746$ & $< {\rm tol}$   \\
 $3$         & $0.007778832447060$ & $0.015901620958740$   \\
 $4$         & $0.007867876984625$ & $0.000534814297427$   \\
 $5$         & $0.007859174170350$ & $0.005003577527752$   \\
 $6$         & $0.007867628372516$ & $0.000996613906802$   \\
 $7$         & $0.007867625995876$ & $0.000999997977039$   \\
 \hline
\end{tabular}
\vspace{2mm}
\caption{Computed values of $\eps$ and $r(\eps)$ for Example 3 (real-structured distance).\label{tab:exwest67r}}
\end{center}
\end{table}

\subsection*{Example 4: {\tt Orr-Sommerfeld matrix}, dimension \texorpdfstring{$99$}{99}}

This matrix is taken from EigTool and arises in the discretization of the Orr-Sommerfeld operator at large Reynolds numbers.
Since the matrix is complex we consider its real part. 

\paragraph*{Real distance}

We set $\delta=10^{-4}$ and obtain the results in Table \ref{tab:exOrrSomr}. Since two real eigenvalues coalesce, the real and complex distances give the same estimate. In particular we get
\begin{eqnarray}
&& \eps^{0,*} = 7.56829388628 \cdot10^{-4}\;, \quad \gamma = 55.720252607\;, \quad \eps^{\delta,*} = 0.00075682939\;.
\nonumber
\end{eqnarray}
The computed eigenvalues, which are expected to coalesce pairwise for $\eps=\eps^{0,*}$ are
\begin{eqnarray}
&& 
\lambda_1 = -0.2675984616 \cdot 10^{-5}\;, \quad
\lambda_2 = -0.2676113453 \cdot 10^{-5}\;.
\nonumber
\end{eqnarray}
whose distance is $1.3\cdot 10^{-10}$.

\begin{table}[ht]
\begin{center}
\begin{tabular}{l|l|l|}\hline
 $k$ & $\eps_k$ & $r(\eps_k)$  \\
 \hline
\rule{0pt}{9pt}
\!\!\!\! $0$  & $0.000500000000000$ & $0.622086808315294$   \\
 $1$          & $0.000875000000000$ & $< {\rm tol}$   \\
 $2$          & $0.000781250000000$ & $< {\rm tol}$   \\
 $3$          & $0.000710937500000$ & $0.015901620958740$   \\
 $4$          & $0.000765252777937$ & $< {\rm tol}$   \\
 $5$          & $0.000751673958453$ & $0.123760620457050$   \\
 $6$          & $0.000756985630150$ & $0.000022826360137$   \\
 $7$          & $0.000755657712225$ & $0.059797479397155$   \\
 $8$          & $0.000756841673737$ & $0.000999999116306$   \\
 $9$          & $0.000756759362406$ & $0.014719249656511$   \\
 $10$         & $0.000756829515328$ & $0.000626846172482$   \\
 $11$         & $0.000756811977097$ & $0.007346284471003$   \\
 $12$         & $0.000756829400745$ & $0.000193924478176$   \\
 $13$         & $0.000756829387268$ & $0.000104977652463$   \\
 $14$         & $0.000756829385407$ & $0.000100003485058$   \\
 \hline
\end{tabular}
\vspace{2mm}
\caption{Computed values of $\eps$ and $r(\eps)$ for Example 4 (real-structured distance).\label{tab:exOrrSomr}}
\end{center}
\end{table}

Interestingly, the value computed by the code in \cite{ABBO11} is
$\eps^{0,*} = 0.00551675$ and the two closest computed eigenvalues of the computed closest
defective matrix differ by $0.00032389$ and have condition numbers $3.96107$ and $4.18569$.

\subsection*{Example 5: {\tt str\_\_600} matrix, dimension \texorpdfstring{$363$}{363}}

This real matrix is part of the SMTAPE collection, and arises from an application of the simplex method.

The complex distance to nearest defective matrix found by the code in \cite{ABBO11} is $6.80846\cdot 10^{-5}$.

\paragraph*{Real distance}

We set $\delta=10^{-3}$ and obtain the results in Table \ref{tab:exstr600r}; as a by-product we compute the 
following values:
\[
\eps^{0,*} = 1.133466479\cdot 10^{-4}\;, \qquad \gamma = 0.237681872\;, \qquad \eps^{\delta,*} = 9.5645234943\cdot 10^{-5}\;.
\]
The computed eigenvalues, which are expected to coalesce pairwise for $\eps=\eps^{0,*}$ are complex conjugate in this case,
\[
\lambda_1 =  0.008426530 + 0.0021636406\iu\;, \qquad
\lambda_2 =  0.008426530 - 0.0021636406\iu\;,
\]
having a distance $|\lambda_1 - \lambda_2| \approx 4.3\cdot 10^{-3}$.

\begin{table}[ht]
\begin{center}
\begin{tabular}{l|l|l|}\hline
 $k$ & $\eps_k$ & $r(\eps_k)$  \\
 \hline
\rule{0pt}{9pt}
\!\!\!\! $0$ & $0.000100000000000$ & $0.000800966492236$   \\
 $1$         & $0.000095636329862$ & $0.001000272860068$   \\
 $2$         & $0.000095645234943$ & $0.001000000062242$   \\
 \hline 
\end{tabular}
\vspace{2mm}
\caption{Computed values of $\eps$ and $r(\eps)$ for Example 5 (real-structured distance).\label{tab:exstr600r}}
\end{center}
\end{table}


\section{Extension to different structures}
\label{sec:10}

In this section we discuss the computation of the pattern-structured distance to defectivity.

We consider here a sparse matrix $A$ with sparsity pattern ${\rm P}$ 
(identifying the nonzero elements of $A$, that is $a_{ij}=0$ if ${\rm P}_{ij}=0$)  
and consider the manifold 
\begin{equation}
\label{Sp}
\Sp = \{ E\in \C^{n\times n} : \|E\|_F=1, E \ \mbox{has sparsity pattern} \ {\rm P} \}\;.
\end{equation} 
In order to obtain a differential equation for $E(t) \in \Sp$ along which a pair of eigenvalues of $A+\eps E(t)$ 
are getting closer so that the quantity $y(t)^H x(t)$ decreases with respect to $t$, we proceed similarly to the previous case; 
at $E \in \Sp$, let $y, x$ be left and right eigenvectors of $A+\eps E$, with $y^H x>0$,  relative to the  
eigenvalue $\lambda(t)$ of $A+\eps E(t)$ with highest condition number, and consider the differential equation 
\be
\label{odesparse}
\dot E = - P_\Sp(S) + \Re\lan E,P_\Sp(S) \ran E\;, \qquad E\in \Sp\;,
\ee
where $S$ is defined in \eqref{S} and $P_\Sp(S)$ denotes the orthogonal projection of $S$ onto $\Sp$, which is simply achieved by setting to zero all entries which do not belong to the nonzero pattern, and normalizing with respect to the Frobenius norm. It can be shown that
(\ref{odefull}) is characterized by the property that $r(t)=y(t)^H x(t)$ is monotonically decreasing along its solutions. 

Similarly to Theorem \ref{th:der}, we obtain a closed formula for the derivative of the function $r(\eps)$ with respect to $\eps$,
and an algorithm similar to Algorithm \ref{algo} is obtained. 

In order to choose the initial eigenvalue for the numerical computation of the structured distance to defectivity, that is to say the point of coalescence of two components of the structured $\eps$-pseudospectrum of $A$, as in Section \ref{sec:8.1} one  computes  the  zero-structured condition numbers of the eigenvalues of $A$; see, e.g., \cite{KKT06,NP06}.

Let us consider an illustrative example.

\subsection*{Example 6}

Consider again the Grcar matrix of dimension $6$ (\ref{eq:grcarmat}).

Table \ref{tab:Grcar} summarizes the computed upper bounds to the $\delta$-distance 
to defectivity (with $\delta=10^{-3}$), obtained applying Algorithm \ref{algo} to the matrix $A$, 
preserving different structures $\Stru$ (${\rm P} \C^{n,n}$ denotes
the set of complex matrices with sparsity pattern ${\rm P}$ and similarly ${\rm P} \R^{n,n}$ denotes the set of real 
matrices with sparsity pattern ${\rm P}$).
Other linear structures may be considered, like the Hessenberg or the Toeplitz but we limit our experiments to
complex/real perturbations preserving or not the pattern structure.
\begin{table}[ht]
\begin{center}
\begin{tabular}{|l|l|}\hline
Structure $\Stru$ & Computed bound for $\delta$-distance to defectivity  \\
 \hline
\rule{0pt}{9pt}
 $\!\!\C^{6,6}$             & $0.2151857\ldots$ \\
 $\R^{6,6}$                 & $0.3007253\ldots$ \\
 ${\rm P} \C^{6,6}$         & $0.6845324\ldots$ \\
 ${\rm P} \R^{6,6}$         & $0.9423366\ldots$ \\
 \hline
\end{tabular}
\vspace{2mm}
\caption{Computed upper bounds for the structured distance to defectivity for Example 6.\label{tab:Grcar}}
\end{center}
\end{table}

\noindent We also remark that, taking into account the Toeplitz structure of $A$ in the choice of the initial eigenvalue, the point of coalescence of two components of the structured $\eps$-pseudospectrum of $A$ should be approximated making use of the Toeplitz-structured  eigenvalue condition numbers; see, e.g., \cite{NP07, BGN12}. 

\appendix
\section{Proof of Proposition 7.4}
\label{sec:A}
\
Since $r'(\eps) = r(\eps) \Re \langle S(\eps), E(\eps) \rangle$ and, by \eqref{eq:Eeps}, \eqref{eq:EEp}, 
\[
\Re \big\langle S(\eps), E'(\eps) \big\rangle = c^{-1} \Re \big\langle E(\eps), E'(\eps) \big\rangle =0\;,
\]
we have,
\[
\begin{split}
r''(\eps) & = r'(\eps) \Re \langle S(\eps), E(\eps) \rangle + r(\eps) \Re \langle S'(\eps), E(\eps) \rangle + r(\eps) \Re \langle S(\eps), E'(\eps) \rangle \\ & = \frac{r'(\eps)^2}{r(\eps)} + r(\eps) \Re \langle S'(\eps), E(\eps) \rangle \;,
\end{split}
\]
where, by \eqref{SH},
\be
\label{Es}
E(\eps) = - \frac{\overline{C(\eps)}}{|C(\eps)|} y(\eps)x(\eps)^H + \bigO(r(\eps)) \;.
\ee
By \eqref{rr'}, \eqref{Es}, and the above computation, we get
\be
\label{se}
\lim_{\eps\nearrow\eps^{0,*}}r(\eps)^3r''(\eps) =  -2|C_*| - \Re \left(\frac{\bar C_*}{|C_*|} (Nx_*)^Hy_*\right)\;,
\ee
provided the limit
\be
\label{N}
N = \lim_{\eps\nearrow\eps^{0,*}} r(\eps)^4 S'(\eps)
\ee
exists. To show this, in what follows we analyze the limiting behavior of $S'(\eps)$ as $\eps$ approaches $\eps^{0,*}$.  

Recalling $C(\eps)$ is defined in \eqref{BC}, by \eqref{ms}, \eqref{Ge}, and \eqref{Be}, a straightforward computation gives, 
\[
\begin{split}
C'(\eps)  & = \frac{2y(\eps)^HM'(\eps)x(\eps)y(\eps)^HB(\eps)^2x(\eps)}{r(\eps)} - y(\eps)^HM'(\eps)B(\eps)^2x(\eps)  \\ & \quad + y(\eps)^H B(\eps)^2M'(\eps) x(\eps)+ y(\eps)^HB'(\eps)x(\eps)\;, \\ (xy^H)'(\eps) & = \frac{y(\eps)^H M' (\eps)x(\eps)}{r(\eps)} \big(B(\eps)x(\eps)y(\eps)^H +x(\eps)y(\eps)^HB(\eps)\big) \\ & \quad - B(\eps)M'(\eps)x(\eps)y(\eps)^H -x(\eps)y(\eps)^HM'(\eps)B(\eps)\;,
\end{split}
\]
Differentiating the right-hand side of \eqref{SH} and using the above expressions we obtain a linear equation for $S'(\eps)$. In view of \eqref{stS}, such equation reads, 
\be
\label{SH'}
\begin{split}
S'(\eps)^H &  = \left( \frac{y(\eps)^HM'(\eps)x(\eps)y(\eps)^HB(\eps)^2x(\eps)}{r(\eps)^3} - \frac{4C(\eps)r'(\eps)}{r(\eps)^3}\right) x(\eps)y(\eps)^H \\ & \qquad +  \frac{2C(\eps)y(\eps)^HM'(\eps)x(\eps)}{r(\eps)^3} \big(B(\eps)x(\eps)y(\eps)^H +x(\eps)y(\eps)^HB(\eps)\big) \\ & \qquad - \frac{2C(\eps)}{r(\eps)^2} \big(B(\eps)M'(\eps)x(\eps)y(\eps)^H +x(\eps)y(\eps)^HM'(\eps)B(\eps)\big) \\ & \qquad +\mc R(M'(\eps),\eps)\;, 
\end{split}
\ee
where the remainder $\mc R(M',\eps)$ is an affine function of $M'$ which is subdominant (as $\eps\nearrow\eps^{0,*}$) with respect to the other terms in the right-hand side; more precisely, there is a positive constant $C'$ such that
\[
\|\mc R(M',\eps)\|_F \le C' \left(\frac{\|M'\|_F}{r(\eps)} + \frac{1}{r(\eps)^3} \right) \qquad \forall\, \eps\in [\underline{\eps},\eps^{0,*})\;.
\]
By Remark \ref{ES}, $E(\eps) = -\|S(\eps)\|_F^{-1}S(\eps)$, so that
\[
M'(\eps) = E(\eps) + \eps E'(\eps) = E(\eps) + \frac{\eps}{\|S(\eps)\|_F} \big(\Re \langle E(\eps), S'(\eps) \rangle E(\eps) -S'(\eps)\big)
\]
Plugging \eqref{Es} and the previous relation in \eqref{SH'} and using \eqref{Be}, we obtain,
\be
\label{SH'2}
\begin{split}
S'(\eps)^H &  = \left( \frac{\Gamma(\eps) y(\eps)^HB(\eps)^2x(\eps)}{r(\eps)^3} - \frac{4C(\eps)r'(\eps)}{r(\eps)^3}\right) x(\eps)y(\eps)^H \\ & \qquad +  \frac{2C(\eps)\Gamma(\eps)}{r(\eps)^3} \big(B(\eps)x(\eps)y(\eps)^H +x(\eps)y(\eps)^HB(\eps)\big) \\ & \qquad + \frac{2\eps C(\eps)}{r(\eps)^2\|S(\eps)\|_F} \big(B(\eps)S'(\eps)x(\eps)y(\eps)^H +x(\eps)y(\eps)^HS'(\eps)B(\eps)\big) \\ & \qquad +\mc R_1(S'(\eps),\eps)\;, 
\end{split}
\ee
where 
\be
\label{Ga}
\begin{split}
\Gamma(\eps) & = y(\eps)^H \eps E'(\eps) x(\eps) = \frac{\eps}{\|S(\eps)\|_F}   y(\eps)^H\big(\Re \langle E(\eps), S'(\eps) \rangle E(\eps) -S'(\eps)\big)x(\eps) \\ & = -\eps\frac{1+\bigO(r(\eps))}{C(\eps)\|S(\eps)\|_F} \Im \big[C(\eps) y(\eps)^HS'(\eps) x(\eps)\big]
\\ & = \eps\frac{1+\bigO(r(\eps))}{C(\eps)\|S(\eps)\|_F} \Im \big[\bar C(\eps) x(\eps)^HS'(\eps)^H y(\eps)\big]
\end{split}
\ee
and the remainder $\mc R_1(S',\eps)$ is an affine function of $S'$ such that, for some $C''>0$,
\be
\label{R1}
\|\mc R_1(S',\eps)\|_F \le C'' \left(r(\eps) \|S'\|_F + \frac{1}{r(\eps)^3} \right) \qquad \forall\, \eps\in [\underline{\eps},\eps^{0,*})\;.
\ee
Multiplying both side of \eqref{SH'2} by $x(\eps)^H$ to the left, by $y(\eps)$ to the right, and using \eqref{Be} we get,
\be
\label{sc}
x(\eps)^HS'(\eps)^Hy(\eps) =  \frac{\Gamma(\eps) y(\eps)^HB(\eps)^2x(\eps)}{r(\eps)^3} - \frac{4C(\eps)r'(\eps)}{r(\eps)^3} + x(\eps)^H\mc R_1(S'(\eps),\eps) y(\eps)\;.
\ee
Therefore, \eqref{SH'2} reads,
\be
\label{SH'3}
\begin{split}
S'(\eps)^H &  = x(\eps)^HS'(\eps)^Hy(\eps) x(\eps)y(\eps)^H \\ & \qquad + \frac{2C(\eps)x(\eps)^HS'(\eps)^Hy(\eps)}{y(\eps)^HB(\eps)^2x(\eps)} \big(B(\eps)x(\eps)y(\eps)^H +x(\eps)y(\eps)^HB(\eps)\big)  \\ & \qquad + \frac{2\eps C(\eps)}{r(\eps)^2\|S(\eps)\|_F} \big(B(\eps)S'(\eps)x(\eps)y(\eps)^H +x(\eps)y(\eps)^HS'(\eps)B(\eps)\big) \\ & \qquad + \frac{8C(\eps)^2r'(\eps)}{y(\eps)^HB(\eps)^2x(\eps)r(\eps)^3} \big(B(\eps)x(\eps)y(\eps)^H +x(\eps)y(\eps)^HB(\eps)\big)\\ & \qquad +\mc R_2(S'(\eps),\eps)\;, 
\end{split}
\ee
where the remainder $\mc R_2(S',\eps)$ satisfies, for any $\eps\in [\underline{\eps},\eps^{0,*})$,
\be
\label{R2}
x(\eps)^H\mc R_2(S',\eps)y(\eps) = 0\;,\qquad \|\mc R_2(S',\eps)\|_F \le C''' \left(r(\eps) \|S'\|_F + \frac{1}{r(\eps)^3} \right)
\ee
(for some $C'''>0$). Equation \eqref{SH'3} must be completed with the condition on $x(\eps)^HS'(\eps)^Hy(\eps)$ coming from \eqref{Ga} and \eqref{sc}, i.e.,
\begin{eqnarray}
\label{eqsc}
x(\eps)^HS'(\eps)^Hy(\eps) & = & \eps\frac{y(\eps)^HB(\eps)^2x(\eps)(1+\bigO(r(\eps)))}{C(\eps)r(\eps)^2\|S(\eps)\|_F} \Im \big[\bar C(\eps) x(\eps)^HS'(\eps)^H y(\eps)\big]\nonumber \\ &&   - \frac{4C(\eps)r'(\eps)}{r(\eps)^3} + x(\eps)^H\mc R_1(S'(\eps),\eps) y(\eps)\;.
\end{eqnarray}
By \eqref{rr'}, the known terms in \eqref{SH'3} and \eqref{eqsc} diverge at most like $r(\eps)^{-4}$. Therefore, the limit \eqref{N} exists and can be computed provided the equation \eqref{SH'3} is solvable at the leading order as $\eps\nearrow \eps^{0,*}$. To this purpose, we write,
\[
S'(\eps) = \frac{N}{r(\eps)^4} \big(1+ \bigO(r(\eps))\big)\;.
\]
Then, recalling \eqref{xy*}, \eqref{BCl}, \eqref{rr'}, \eqref{stS}, and setting
\[
\alpha = \frac{2C_*}{y_*^HB_*^2x_*}\;, \qquad \beta = \lim_{\eps\nearrow \eps^{0,*}}\frac{2\eps C(\eps)}{r(\eps)^2\|S(\eps)\|_F}=\frac{\eps^{0,*}C_*}{|C_*|}\;,
\]
the equation \eqref{SH'3} implies that the unknown $N$ has to solve the linear equation,
\be
\label{NK}
\begin{split}
N^H & = \zeta  x_*y_*^H + \alpha \zeta (B_*x_*y_*^H +x_*y_*^HB_*) + \beta (B_*Nx_*y_*^H +x_*y_*^HNB_*) \\ & \quad - 8|C_*| C_* \alpha (B_*x_*y_*^H +x_*y_*^HB_*)  \;,
\end{split}
\ee
where the parameter $\zeta = (Nx_*)^Hy_*$ has to be determined by the condition \eqref{eqsc}. To this end, we cannot simply multiply both sides of \eqref{eqsc} by $r(\eps)^4$ and take the limit $\eps\nearrow\eps^{0,*}$, as in this case we obtain the undetermined condition $\Im(\bar C_* \zeta) = 0$. Instead, we must take into account also the lower order term on the left-hand side and solve the approximate equation for $r(\eps)$ small but still positive. In this way we obtain that $\zeta$ satisfies (up to errors vanishing as $\eps\nearrow\eps^{0,*}$),
\[
\zeta = \frac{\beta}{C_*\alpha} \Im(\bar C_* \zeta) \frac 1{r(\eps)} + 8|C_*| C_* \;,
\] 
whose unique solution is readily seen to be $\zeta = 8|C_*|C_*$. Plugging this value in \eqref{NK} we finally obtain that $N$ must solve the equation,
\[
N^H = 8|C_*|C_* x_*y_*^H + \beta (B_*Nx_*y_*^H +x_*y_*^HNB_*) \;, 
\]
which admits a unique solution at least for $ \|\beta B_*\|_F = \eps^{0,*}\|B_*\|_F$ sufficiently small. Moreover, by \eqref{se} and the definition of $\zeta$,
\[
\lim_{\eps\nearrow\eps^{0,*}}r(\eps)^3r''(\eps)  = -2|C_*| - \Re \left(\frac{\bar C_*}{|C_*|} \zeta\right) = -2|C_*|(1+4|C_*|)\;,
\]
that is \eqref{r3r''}. 
\endproof

\subsection*{Acknowdlegments}

The authors wish to thank the Italian  Ministry of Education, Universities and Research (MIUR) and the INdAM-\-GNCS for support.


\end{document}